% Last modified 11MAY%
\documentclass[reqno]{amsart}
\usepackage{amsmath,amsfonts,amssymb,graphics,graphicx,latexsym,amscd, subfigure}

\newtheorem{thm}{Theorem}[section]
\newtheorem{theorem}[thm]{Theorem}

\newtheorem{prop}[thm]{Proposition}

\newtheorem{lemma}[thm]{Lemma}
\newtheorem{defn}[thm]{Definition}
\newtheorem{rem}[thm]{Remark}

\newcommand{\bbR}{\mathbb{R}}
\newcommand{\bbQ}{\mathbb{Q}}
\newcommand{\bbT}{\mathbb{T}}
\newcommand{\bbC}{\mathbb{C}}
\newcommand{\bbZ}{\mathbb{Z}}
\newcommand{\bbH}{\mathbb{H}}
\newcommand{\ip}{\cdot}

\newcommand{\s}{{\bf u}}

\newcommand{\cat}{\mathcal}

\newcommand{\R}{\mathbb R}
\newcommand{\Z}{\mathbb Z}
\newcommand{\N}{\mathbb N}
\newcommand{\C}{\mathbb C}

\newcommand{\T}{\mathbb T}

\newcommand{\Ii}{{\cat I}}

\newcommand{\Xx}{{\cat X}}

\newcommand{\om}{{\omega}}
\newcommand{\de}{{\delta}}

\newcommand{\la}{{\lambda}}

\DeclareMathOperator{\Diff}{Diff}

\DeclareMathOperator{\Hess}{Hess}
\DeclareMathOperator{\Det}{det}

\newcommand{\op}{{\overline{\partial}}}
\newcommand{\p}{{\partial}}
\newcommand{\cp}{{\mathbb{CP}}}

\begin{document}
\title[Scalar-flat K\"ahler metrics]
{Scalar-flat K\"ahler metrics on non-compact symplectic toric $4$-manifolds}
\author{Miguel Abreu} 
\author{Rosa Sena-Dias}
\begin{thanks}
{Partially supported by the Funda\c{c}\~{a}o para a Ci\^{e}ncia e a Tecnologia
(FCT/Portugal).}
\end{thanks}
\address{Centro de An\'{a}lise Matem\'{a}tica, Geometria e Sistemas
Din\^{a}micos, Departamento de Matem\'atica, Instituto Superior T\'ecnico, Av.
Rovisco Pais, 1049-001 Lisboa, Portugal}.
\email{mabreu@math.ist.utl.pt, rsenadias@math.ist.utl.pt}

\begin{abstract}
In a recent paper Donaldson~\cite{d1} explains how to use an older construction of 
Joyce~\cite{j} to obtain four dimensional local models for scalar-flat K\"ahler 
metrics with a $2$-torus symmetry. In~\cite{d2}, using the same idea, he recovers 
and generalizes the Taub-NUT metric by including it in a new family of complete 
scalar-flat toric K\"ahler metrics on $\mathbb{R}^4$. In this paper we generalize 
Donaldson's method and construct complete scalar-flat toric K\"ahler metrics on any 
symplectic toric $4$-manifold with ``strictly unbounded'' moment polygon. These include 
the asymptotically locally Euclidean scalar-flat K\"ahler metrics previously constructed by 
Calderbank and Singer~\cite{cs}, as well as new examples of complete scalar-flat toric 
K\"ahler metrics which are asymptotic to Donaldson's generalized Taub-NUT metrics. Our 
construction is in symplectic action-angle coordinates and determines all these metrics 
via their symplectic potentials. When the first Chern class is zero we obtain a new 
description of known Ricci-flat K\"ahler metrics.
\end{abstract}

\maketitle 

\section{Introduction}

The problem of finding constant scalar curvature K\"ahler metrics has been a source of a lot of interesting work in K\"ahler geometry. In particular a lot of effort has been put in proving a general existence result for such metrics, under suitable hypothesis in the compact case. Recently, this problem was completely settled for smooth compact toric complex surfaces by Donaldson in~\cite{d2}, using a particularly nice feature of toric manifolds: the existence of global symplectic action-angle coordinates, where compatible toric complex structures can be easily parametrized via a symplectic potential function (see~\cite{m2}). However, even in this case, these compact constant scalar curvature K\"ahler metrics remain somewhat elusive and it is quite hard to find explicit examples.

A K\"ahler metric on a complex surface is 
\begin{itemize}
\item[(i)] scalar-flat iff it is anti-self-dual, and 
\item[(ii)] Ricci-flat iff it is hyperk\"ahler.
\end{itemize} 
There are several constructions that use these facts to produce explicit examples of scalar-flat 
K\"ahler metrics, notably in the non-compact toric setting: 
\begin{itemize}
\item[-] The gravitational instantons of Gibbons, Hawking, Hitchin and 
Kronheimer \cite{gh,h,k}, give asymptotically locally Euclidean (ALE) Ricci-flat 
K\"ahler metrics on toric resolutions $A_p$ of orbifolds of the form $\mathbb{C}^2/\Gamma_p$, where $\Gamma_p$ is a finite subgroup of 
$SU(2)$ of order $p\in\N$. When $p=1$ one gets the standard flat metric on 
$A_1 = \C^2$, and when $p=2$ one gets the Eguchi-Hanson metric on the total space 
of the line bundle $\mathcal{O}(-2)$ over $\mathbb{CP}^1$.
\item[-] LeBrun~\cite{l0,l1} constructs ALE scalar-flat K\"ahler metrics on 
toric resolutions of orbifolds of the form $\mathbb{C}^2/\Gamma$, where $\Gamma$ 
is a finite cyclic diagonal subgroup of $U(2)$. These correspond to the total spaces 
of the line bundles $\mathcal{O}(-k)$ over $\mathbb{CP}^1$. When $k=1$ one gets the 
Burns metric on $\mathcal{O}(-1)$.
\item[-] Joyce, Calderbank and Singer~\cite{j,cs}, construct ALE scalar-flat 
K\"ahler metrics on toric resolutions of orbifolds of the form $\mathbb{C}^2/\Gamma$,
where $\Gamma$ is a finite cyclic subgroup of $U(2)$ such that $\mathbb{C}^2/\Gamma$ 
has an isolated singular point at the origin.
A crucial ingredient in~\cite{cs} is the work of Calderbank and 
Pedersen~\cite{cp} expressing Joyce's construction in terms of axi-symmetric 
harmonic functions on $\R^3$.
\item[-] LeBrun~\cite{l2} studies the asymptotically locally flat (ALF) 
Ricci-flat K\"ahler metrics constructed by Hawking~\cite{ha} on the above 
$A_p$ resolutions. These give the Taub-NUT metric on $A_1 = \C^2$ and 
``multi Taub-NUT metrics'' on the other $A_p$. 
\end{itemize}

In this paper we describe an explicit method to obtain ALE and ``generalized Taub-NUT" scalar-flat 
toric K\"ahler metrics on any smooth toric complex surface $X$ that can be obtained as a finite sequence of blow ups of a minimal resolution of $\bbC^2/\Gamma$, with $\Gamma$ some finite ciclic subgroup 
of $U(2)$. These include all of the above metrics, together with the Taub-NUT version of the
scalar-flat K\"ahler metrics constructed in~\cite{cs}. 

Our method is based on the recent work of Donaldson~\cite{d1}, where he shows how to translate the work of Joyce~\cite{j} into the framework of symplectic action-angle coordinates, describing a process to write down symplectic potentials of local scalar-flat toric K\"ahler metrics.  In~\cite{d2}, section 6, he uses this method to recover the symplectic potentials of the flat and Taub-NUT metrics on $\bbR^4$. Moreover, he shows how the Taub-NUT metric can be included in a new family of complete scalar-flat toric K\"ahler metrics on $\R^4$. We will often refer to these metrics as \emph{Donaldson's generalized
Taub-NUT metrics} or simply as \emph{generalized Taub-NUT metrics}. We extend Donaldson's method to general non-compact symplectic toric $4$-manifolds. This allows us to do the following:
\begin{itemize}
\item First, we explicitly construct the symplectic potential of an ALE scalar-flat toric K\"ahler metric for any (strictly) unbounded symplectic toric $4$-manifold. Although it follows from the work 
of Fujiki~\cite{f} and Wright~\cite{w1} that these metrics are isometric to the ones constructed
in~\cite{cs}, our point of view is different. See also~\cite{w2} for yet another description of
these metrics.
\item Second, again for any (strictly) unbounded symplectic toric $4$-manifold, we explicitly construct the symplectic potential of a family of complete scalar-flat toric K\"ahler metrics which are asymptotic to Donaldson's generalized Taub-NUT metrics.
\end{itemize}
By a strictly unbounded symplectic toric $4$-manifold we roughly mean one whose moment polygon 
has two non-parallel unbounded edges (see Section~\ref{sec:unbounded}). As we show in
Proposition~\ref{prop:complex}, this is the precise symplectic counterpart of 
``a finite sequence of blow ups of a minimal resolution of $\bbC^2/\Gamma$, with $\Gamma$ a finite cyclic subgroup of $U(2)$ such that $\mathbb{C}^2/\Gamma$ has an isolated singular point at the origin".

Our construction is explicit up to inverting an algebraic function. Moreover, in the case where $c_1=0$, we can determine which of the metrics constructed above are Ricci-flat. 
One can check that those are the gravitational instantons and multi Taub-NUT metrics

To summarize, we prove the following theorem.

\begin{thm}\label{general_statement}
Any strictly unbounded symplectic toric $4$-manifold admits an ALE scalar-flat toric K\"ahler metric as well as a two parameter family of complete scalar-flat toric K\"ahler metrics, each of which is asymptotic to a Donaldson generalized Taub-NUT metric.

Any strictly unbounded symplectic toric $4$-manifold with $c_1=0$ admits an ALE Ricci-flat toric K\"ahler metric as well as a one parameter family of complete Ricci-flat toric K\"ahler metrics, each of which is asymptotic to a Taub-NUT metric.
\end{thm}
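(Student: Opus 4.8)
The plan is to reduce the global existence statement to a boundary-value problem for a single convex function on the moment polygon and to solve it explicitly through the harmonic reformulation of Joyce and Donaldson.

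First I would record the combinatorial data of a strictly unbounded polygon $P\subset\R^2$: its edges $E_1,\dots,E_d$, defined by $\ell_k(x)=\langle \nu_k,x\rangle-\la_k\ge 0$ with primitive inward normals $\nu_k$, its Delzant vertices, and especially the two non-parallel unbounded edges that fix the conical $\C^2/\Gamma$ model at infinity. A compatible toric K\"ahler metric on the associated manifold is encoded by a strictly convex symplectic potential $u$ on the interior of $P$, the metric being $g=\sum u_{ij}\,dx_i\,dx_j+\sum u^{ij}\,d\theta_i\,d\theta_j$ with $(u^{ij})=\Hess(u)^{-1}$. Two requirements single out the metrics we want: the Abreu scalar-flat equation $\sum_{i,j}\p_i\p_j u^{ij}=0$, and the Guillemin boundary conditions along each $E_k$, which force $g$ to extend smoothly across $\p P$ and the torus orbits to collapse with the correct multiplicity.

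Next I would invoke Donaldson's local recipe, extended to this non-compact setting. Expressing the inverse Hessian in terms of an axially symmetric harmonic function on a half-space of $\R^3$ converts the nonlinear scalar-flat equation into the flat Laplace equation, so that solutions may be freely superposed. The ALE metric is produced from a distinguished harmonic function assembled as a finite superposition of fundamental solutions supported on the axis segments prescribed by the vertices and normals of $P$, the rod structure matching the edge data. The generalized Taub-NUT family is then obtained by adding a two-parameter family of everywhere-regular harmonic backgrounds; the analogue of the Gibbons--Hawking constant governs the transition from Euclidean volume growth (ALE) to the extra asymptotic circle of Taub-NUT type. By construction each candidate is scalar-flat, so the real content lies in the boundary and asymptotic analysis.

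The main obstacle, and the bulk of the work, is the passage from these local harmonic models to genuinely complete smooth metrics on the whole manifold with the asserted asymptotics. Concretely I would (a) verify that the harmonic data carries precisely the logarithmic singularity structure needed to reproduce the Guillemin conditions at every edge, so that $u$ closes up to a smooth metric over all of $P$ --- this is exactly where strict unboundedness and the Delzant conditions at the vertices are used; (b) analyze the ends, showing the distinguished solution yields the Euclidean volume growth of an ALE space modeled on $\C^2/\Gamma$, while the perturbed solutions yield the growth characteristic of generalized Taub-NUT; and (c) deduce completeness from this asymptotic control together with interior smoothness and strict convexity of $u$. A genuine subtlety is that the construction is explicit only up to inverting an algebraic function relating the auxiliary harmonic coordinates to the moment coordinates $x$, so uniform invertibility must be established to confirm that $g$ is a bona fide K\"ahler metric everywhere.

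Finally, for the Ricci-flat statement I would first translate the condition $c_1=0$ into a combinatorial constraint on $P$, the Calabi--Yau counterpart of strict unboundedness, under which the edge normals arrange the canonical class to vanish. Since a scalar-flat K\"ahler surface is Ricci-flat precisely when it is hyperk\"ahler, this imposes one additional algebraic relation among the parameters of the families above. Imposing it singles out the gravitational instanton inside the ALE family and cuts the two-parameter generalized Taub-NUT family down to the one-parameter family of (multi) Taub-NUT metrics, which gives the second assertion.
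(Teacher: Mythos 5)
Your proposal follows essentially the same route as the paper: Donaldson's action-angle version of Joyce's construction, with the harmonic data assembled as an explicit superposition of axisymmetric solutions whose weights and axis points encode the normals and bounded-edge lengths of $P$, a linear (Gibbons--Hawking-type) background producing the two-parameter generalized Taub-NUT family, verification of the Guillemin boundary conditions and of completeness and asymptotics at the ends, and, for $c_1=0$, a combinatorial constraint on the normals that cuts the family down to one parameter. The only cosmetic deviations are that the paper works with the \emph{pair} of harmonic functions $\xi=(\xi_1,\xi_2)=\nabla\s$ (the Legendre-dual coordinates) rather than a single harmonic potential, makes explicit the cone condition $\det(\nu,\nu_1),\,\det(\nu,\nu_d)>0$ under which your ``regular backgrounds'' preserve $\det D\xi>0$ --- exactly the positivity/invertibility subtlety you correctly flagged --- and establishes Ricci-flatness by the direct action-angle computation that $\log r$ be affine in $\xi$ rather than via the hyperk\"ahler characterization.
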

\begin{rem} \label{rem:dualcone}
The above family of generalized scalar-flat Taub-NUT metrics is naturally parametrized by points in the interior of a cone in $\R^2$. This cone is determined by the (ordered) pair of non-parallel unbounded edges of the moment polygon of the symplectic toric $4$-manifold (see Remark~\ref{rem:cone}). 
When $c_1=0$, the above family of generalized Ricci-flat Taub-NUT metrics corresponds to points in a ray in the interior of this cone (see Lemma~\ref{lem:ray1} and Lemma~\ref{lem:ray2}).
\end{rem}
\begin{rem} \label{rem:circle}
The precise meaning of ``asymptotic to a generalized Taub-NUT metric" is explained in the 
proof of Proposition~\ref{complete}. A further analysis of the asymptotic behaviour of this family of metrics
will be carried out in~\cite{AS}, where we will also prove that these are the only complete scalar-flat
toric K\"ahler metrics on strictly unbounded symplectic $4$-manifolds. Here we will limit ourselves to
identifying in Proposition~\ref{prop:circle}, for each metric in the $2$-parameter family of 
generalized Taub-NUT metrics in the above theorem, a unique $1$-dimensional subspace of the
Lie algebra of $\T^2$ whose vectors induce vector fields on $X$ with bounded length. 

This $1$-dimensional subspace encodes significant information about the metric's asymptotic behaviour.
When it is ``rational", i.e. the Lie algebra of a circle subgroup $S^1\subset\T^2$, one might expect that
the metric is ALF. In fact, this happens in the $c_1 = 0$ cases for the $1$-parameter family of Ricci-flat
metrics. In these cases this circle $S^1\subset\T^2$ corresponds to the $S^1$-symmetry mentioned in~\cite{l2} 
and one can use the $4$-dimensional case of the more general classification result of Bielawski~\cite{B} to
identify this $1$-parameter family of Ricci-flat metrics with the ALF ``multi Taub-NUT metrics". When this
$1$-dimensional subspace is ``irrational", i.e. the Lie algebra of a dense $1$-parameter subgroup of $\T^2$,
the asymptotic behaviour is not as clear and here we will only characterize it as asymptotic to one of Donaldson's 
generalized Taub-NUT metrics on $\R^4$.
\end{rem}
\begin{rem} \label{rem:orbifolds1}
Using Remark~\ref{rem:orbifolds2} and the set-up of~\cite{m4}, this theorem can be easily 
generalized to unbounded symplectic toric $4$-orbifolds.
\end{rem}

The paper is organized as follows. In Section~\ref{sec:unbounded} we give a precise definition and characterization of (strictly) unbounded symplectic toric $4$-manifolds. We also describe how toric K\"ahler metrics on these manifolds can be parametrized using action-angle coordinates and symplectic potentials. In Section~\ref{sec:donaldson} we review Donaldson's version of Joyce's construction. In this version, the construction gives the symplectic potential of any local scalar-flat toric K\"ahler metric. In Section~\ref{sec:proof} we show how to make this construction compatible with boundary conditions arising from an unbounded moment polygon. In Section~\ref{sec:complete} we analyze the asymptotic behavior of the constructed metrics. In Section~\ref{sec:ric=0} we specialize to the $c_1=0$ case 
and determine which of the constructed metrics are Ricci-flat. Finaly, in Section~\ref{sec:examples} we carry out the construction process very explicitly in several examples to obtain concrete symplectic potentials. In particular, we write down the explicit formula for the symplectic potential of the family of generalized Taub-NUT metrics on the total space of the line bundle $\mathcal{O}(-2)$ over $\mathbb{CP}^1$. These are the simplest new scalar-flat toric K\"ahler metrics obtained in this paper.

\vspace{.5cm}
\noindent \textbf{Acknowledgments:} The first author thanks Jos\'e Nat\'ario for useful conversations. The second author thanks Peter Kronheimer for encouragement and helpful discussions. We both thank Simon Donaldson and Claude Lebrun for relevant comments.

After posting this paper on the arXiv, the authors learned that Dominic Wright sketched
in his 2009 Ph.D thesis~\cite{w3} a construction related to the one we use in the proof 
of Theorem~\ref{general_statement}. We thank him for pointing this out and for sending us 
a copy of his thesis.

\section{Toric K\"ahler metrics on unbounded symplectic toric $4$-manifolds} 
\label{sec:unbounded}

In this section we give a precise definition and characterization of (strictly) unbounded symplectic toric $4$-manifolds. We also describe how toric K\"ahler metrics on these manifolds can be 
parametrized using action-angle coordinates and symplectic potentials.

\subsection{Unbounded symplectic toric $4$-manifolds}

\begin{defn} \label{def:torb}
A symplectic toric $4$-manifold is a connected $4$-dimensional
symplectic manifold $(X,\om)$ equipped with an effective Hamiltonian
action $\tau:\T^2 \to \Diff (X,\om)$
of the standard (real) $2$-torus $\T^2 = \R^2/2\pi\Z^2$, such that
the corresponding moment map $\mu : X \to \R^2$, well-defined up to 
a constant, is proper onto its convex image $P=\mu(X)\subset \R^2$.
\end{defn}

When $X$ is compact, the convexity theorem of Atiyah-Guillemin-Sternberg 
states that $P$ is the convex hull of the image of the points in $X$ 
fixed by $\T^2$, i.e. a compact convex polygon in $\R^2$. A theorem of Delzant~\cite{de} 
then says that this compact convex polygon $P\subset\R^2$ completely determines the 
symplectic toric manifold, up to equivariant symplectomorphisms.

Delzant's theorem can be generalized to the class of non-compact symplectic
toric $4$-manifolds considered in the above definition (see~\cite{kl} for the
general classification of non-compact symplectic toric manifolds). In
order to state this generalization one needs the following definition.

\begin{defn} \label{def:lapo}
A \emph{moment polygon} is a convex polygonal region $P\subset\R^2$ such that:
\begin{itemize}
\item[(1)] any edge has an interior normal $\nu$ which is a primitive vector
of the lattice $\Z^2$;
\item[(2)] for any pair of intersecting edges, the corresponding interior normals 
determined by (1) form a $\Z$-basis of the lattice $\Z^2$.
\end{itemize}

Two moment polygons are \emph{isomorphic} if one can be mapped to the other
by a translation in $\R^2$.
\end{defn}

\begin{thm} \label{thm:genDel}
Let $(X,\om,\tau)$ be a symplectic toric $4$-manifold, with
moment map $\mu : X \to \R^2$. Then $P\equiv \mu(X)$ is a
moment polygon.

Two symplectic toric $4$-manifolds are equivariant symplectomorphic
(with respect to a fixed torus acting on both) if and only if their
associated moment polygons are isomorphic. Moreover, every moment
polygon arises from some symplectic toric $4$-manifold.
\end{thm}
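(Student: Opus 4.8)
The plan is to establish this generalized Delzant correspondence in three stages: first proving that every moment polygon $P$ arises from a symplectic toric $4$-manifold (existence), then that the polygon determines the manifold up to equivariant symplectomorphism (uniqueness), and finally that the moment image of an arbitrary such manifold is indeed a moment polygon in the sense of Definition \ref{def:lapo} (necessity). For existence, I would carry out the Delzant construction locally over each vertex and edge and then glue. Near a vertex $v$ where two edges meet with interior normals $\nu_1,\nu_2$ forming a $\Z$-basis of $\Z^2$ by condition (2), one can apply an element of $GL(2,\Z)$ (which lifts to an equivariant symplectomorphism, since it is an automorphism of the torus $\T^2$) together with a translation to move $v$ to the origin and the normals to the standard basis; the corresponding local model is the standard $\C^2$ with its linear $\T^2$-action and moment image the standard first quadrant. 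Along an unbounded or bounded edge away from vertices one has the local model $\C \times (\C^* \text{ or } T^*S^1)$. The smoothness condition (2) is exactly what guarantees these charts are genuine smooth manifolds rather than orbifolds, and condition (1) ensures the $\T^2$-action is well defined.

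The uniqueness statement I would prove by adapting Delzant's original argument to the non-compact setting following \cite{kl}. Given two symplectic toric $4$-manifolds $(X_1,\om_1,\tau_1)$ and $(X_2,\om_2,\tau_2)$ with isomorphic moment polygons, after a translation we may assume $\mu_1(X_1)=\mu_2(X_2)=P$. The key point is that the $\T^2$-orbit structure is entirely encoded by the polygon: over the interior of $P$ the action is free with orbits $\T^2$, over the relative interior of each edge the stabilizer is the circle determined by the edge's interior normal, and over each vertex the fiber is a point. One then builds an equivariant symplectomorphism first over the interior using the action-angle (Arnold--Liouville) coordinates, in which $\om = \sum dx_i \wedge d\theta_i$ takes a canonical form, and extends it across the boundary strata using the local normal-form models from the existence step. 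Properness of the moment maps is what lets this local-to-global extension go through.

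For the necessity statement, that $\mu(X)$ is automatically a moment polygon, I would argue locally using the equivariant Darboux/normal-form theorem for Hamiltonian torus actions. Near any point of $X$ the action looks like a linear symplectic $\T^k$-action on a model $\C^k \times T^*\T^{2-k}$, whose moment image is a product of a quadrant (or its lower-dimensional analogue) with a linear space; this forces the local structure of $P$ to be polyhedral with the stated normal and basis conditions. Convexity and the global polygonal structure then follow from the properness assumption built into Definition \ref{def:torb}, via the non-compact convexity results of \cite{kl}.

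I expect the main obstacle to be the global extension in the uniqueness argument: patching the equivariant symplectomorphisms defined on the separate local models (interior, edge-neighborhoods, vertex-neighborhoods) into a single globally defined map. In the compact case Delzant handles this by a clean inductive/partition-of-unity argument that exploits compactness; here one must instead lean essentially on properness of $\mu$ to control behavior along the unbounded edges and ensure the patched map is a diffeomorphism at infinity. Making this gluing precise — in particular verifying that the transition data between an edge chart and its two adjacent vertex charts are compatible and that no monodromy obstruction arises over the (possibly unbounded) edges — is the technical heart of the proof, and is precisely the content that \cite{kl} supplies in the general non-compact classification.
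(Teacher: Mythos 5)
The paper does not actually prove Theorem~\ref{thm:genDel}: it is stated as a known generalization of Delzant's theorem~\cite{de}, with the non-compact classification deferred entirely to Karshon and Lerman~\cite{kl}; the only construction carried out in the paper is the canonical model $(X_P,\om_P,J_P,\tau_P)$, obtained by a single global K\"ahler reduction of $\C^d$ (with $d$ the number of edges of $P$), which supplies the existence half. So there is no internal proof to compare against, and your outline has to be judged against~\cite{kl}. Measured that way it is structurally sound, but for existence it takes a genuinely different route from the paper: you glue local normal-form charts ($\C^2$ at vertices, $\C\times T^*S^1$ along open edges, $T^*\T^2$ over the interior), whereas the paper's canonical model comes from one global symplectic reduction. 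The reduction route is cleaner here: it avoids any patching at the existence stage and automatically produces the compatible complex structure $J_P$ that the rest of the paper uses, while your gluing approach duplicates, already for existence, the same patching problem you then confront again in the uniqueness step.

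One point you should sharpen, because as written it misidentifies the mechanism behind uniqueness. In the non-compact setting the moment image does \emph{not} in general determine the manifold: without the hypothesis of Definition~\ref{def:torb} that $\mu$ is proper onto a convex image, the uniqueness assertion is simply false. In~\cite{kl}, symplectic toric manifolds are classified by the quotient $X/\T^2$ together with a degree-two sheaf-cohomology class that encodes precisely the gluing/monodromy ambiguity you flag as the ``technical heart.'' Properness onto the convex set $P$ identifies the quotient with $P$ itself, and it is then the convexity --- hence contractibility --- of $P$ that forces this cohomology class to vanish, so that the local models patch together uniquely. In other words, properness does not directly ``control the patched map at infinity,'' as you suggest; its role is to guarantee that the base of your gluing problem is a contractible polygonal region, which is what kills the obstruction. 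With that correction, your three-stage outline (local normal forms for necessity, gluing over a contractible base for uniqueness, construction for existence) is a faithful summary of the argument that~\cite{kl} supplies and that the paper implicitly invokes.
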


Non-compact sumplectic toric manifolds can have an infinite number of
fixed points. As specified in the following definition, we will not consider 
that possibility in this paper.

\begin{defn}
A symplectic toric $4$-manifold is said to be \emph{unbounded} if its
moment polygon is unbounded and has a finite number of vertices

A symplectic toric $4$-manifold is said to be \emph{strictly unbounded} if 
it is unbounded, and its moment polygon has non-parallel unbounded edges.
\end{defn}

\begin{rem} \label{rem:order}
When $P$ is the moment polygon of an unbounded symplectic toric $4$-manifolds, 
we will order its edges $E_1, \ldots,E_d$, and corresponding primitive interior 
normals $\nu_1, \ldots,\nu_d$, so that:
\begin{itemize}
\item[(i)] $E_1$ and $E_d$ are the unbounded edges of $P$.
\item[(ii)] $E_{i-1}\cap E_i \ne \emptyset$ and $\det(\nu_{i-1},\nu_i) = -1$, for all $i=2,\ldots,d$.
\end{itemize}
\end{rem}

Some well known examples of strictly unbounded symplectic toric $4$-manifolds are minimal 
resolutions of $(\R^4\cong\bbC^2)/\Gamma$, where $\Gamma$ is a finite cyclic subgroup of 
$U(2)$ such that $\mathbb{C}^2/\Gamma$ has an isolated singular point at the origin
(see the proof of Proposition~\ref{prop:complex} below). For every choice of coprime integers
$0<q<p$ there is one such $\Gamma \subset U(2)$, generated by
\[ 
\begin{pmatrix}
  e^{\frac{2i\pi}{p}}&0\\
 0& e^{\frac{2i\pi q}{p}}
\end{pmatrix}.
\]
Note that $\Gamma\subset SU(2)$ iff $q=p-1$. These are the $\Gamma_p \subset SU(2)$ mentioned
in the introduction.

The simplest example of an unbounded symplectic toric $4$-manifold which is not strictly unbounded is $S^2\times \bbR^2$ with standard product symplectic form and $\bbT^2$-action. Its unbounded moment polygon is
\[
P = \{(x,y):y\in[0,a],x\geq 0\}\,
\]
where $a>0$ parametrizes the symplectic area of $S^2 \times \left\{0\right\}$, i.e. the cohomology class of the symplectic form. Although Theorem~\ref{general_statement} does not apply, $S^2\times \bbR^2$ does carry an obvious zero scalar curvature toric K\"ahler metric: the round metric on $S^2$ times the hyperbolic metric on $\bbR^2$. However, this metric is neither ALE nor ALF. Moreover, $S^2\times \bbR^2$ equipped with the complex structure determined 
by this metric is biholomorphic to $\cp^1\times D$, where $D\subset\C$ is the disc, 
while the smooth toric complex surface that the moment polygon $P$ naturally determines is $\cp^1 \times \C$.

\begin{rem} \label{rem:orbifolds2}
One can use the work of Lerman and Tolman~\cite{lt} to generalize Theorem~\ref{thm:genDel}
to orbifolds. The outcome is a classification of symplectic toric $4$-orbifolds via
rational labeled moment polygons, i.e. moment polygons where ``$\Z$-basis'' in (2) of
Definition~\ref{def:lapo} is replaced by ``$\bbQ$-basis'' and one attaches a positive 
integer label to each edge. 

Each edge $E$ of a rational moment polygon $P\subset \R^2$ determines a unique lattice vector $\nu_E\in\Z^2$: the primitive inward pointing normal lattice vector. A convenient way of thinking about a positive integer label $m_E\in\N$ attached to $E$ is by dropping the primitive 
requirement from this lattice vector: consider $m_E\nu_E$ instead of $\nu_E$.

In other words, a rational labeled moment polygon can be defined as a rational polygonal
region $P\subset \R^2$ with an inward pointing normal lattice vector associated 
to each of its edges. Using this definition and the set-up of~\cite{m4}, the contents of this
paper generalize immediately to unbounded symplectic toric $4$-orbifolds.
\end{rem}

\subsection{The smooth toric complex surface determined by a symplectic toric $4$-manifold}
\label{ssec:complex}

The proof of Theorem~\ref{thm:genDel} gives an explicit construction of 
a canonical model for each symplectic toric $4$-manifold, i.e. it associates 
to each moment polygon $P$ an explicit symplectic toric $4$-manifold 
$(X_P,\om_P,\tau_P)$ with moment map $\mu_P:X_P\to P$. Moreover, since this 
explicit construction consists of a certain K\"ahler reduction of the standard 
$\C^d$, for $d=$ number of edges of $P$, $X_P$ has a canonical $\T^2$-invariant 
complex structure $J_P$ compatible with $\om_P$. In other words, each
symplectic toric $4$-manifold is K\"ahler and to each moment polygon 
$P\subset\R^2$ one can associate a canonical toric K\"ahler surface 
$(X_P,\om_P,J_P,\tau_P)$ with moment map $\mu_P:X_P\to P$. In particular,
to each moment polygon $P\subset\R^2$ one can associate a canonical smooth toric complex 
surface $(X_P,J_P,\tau_P)$.

There is another natural way to associate a smooth toric complex surface to a moment 
polygon $P\subset\R^2$. One considers the fan $F_P$ determined by the interior normals 
to the edges of $P$ and the smooth toric complex surface $X_{F_P}$ determined by 
this fan. 

The following well-known result relates these two smooth toric complex surfaces naturally
associated to a moment polygon.

\begin{prop}
$(X_P,J_P,\tau_P)$ and $X_{F_P}$ are biholomorphic smooth toric complex surfaces.
\end{prop}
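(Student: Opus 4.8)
The plan is to exploit the fundamental classification theorem of toric geometry: a smooth toric complex surface is determined, up to equivariant biholomorphism, by its fan. Since $X_{F_P}$ is by construction the toric surface attached to the fan $F_P$, it suffices to show that the fan associated to the toric complex surface $(X_P,J_P,\tau_P)$ is exactly $F_P$, the normal fan of $P$ whose rays are generated by the primitive interior normals $\nu_1,\dots,\nu_d$.

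First I would recall the explicit model of $(X_P,J_P,\tau_P)$ furnished by the proof of Theorem~\ref{thm:genDel}: writing $P=\bigcap_{i=1}^d\{x\in\R^2:\langle x,\nu_i\rangle\ge\lambda_i\}$, one forms the surjection $\R^d\to\R^2$ sending the $i$-th basis vector to $\nu_i$, whose kernel exponentiates to a subtorus $K\subset\T^d$ fitting into $1\to K\to\T^d\to\T^2\to 1$, and sets $X_P=\phi_K^{-1}(0)/K$, the K\"ahler reduction of $(\C^d,\omega_{\mathrm{std}})$ at the level prescribed by the $\lambda_i$, with $J_P$ the complex structure descending from $\C^d$. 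I would then invoke the Kempf--Ness correspondence between symplectic and GIT quotients to identify, as complex manifolds carrying a $(\C^*)^2$-action,
\[
X_P=\phi_K^{-1}(0)/K\;\cong\;(\C^d\setminus Z_P)/K_\C,
\]
where $K_\C\cong(\C^*)^{d-2}$ is the complexification of $K$ and $Z_P$ is the unstable locus. The remaining combinatorial step is to identify $Z_P$: a point $z$ lies in $\C^d\setminus Z_P$ precisely when the edges $\{E_i:z_i=0\}$ share a common vertex of $P$, equivalently when $\{\nu_i:z_i=0\}$ spans a cone of $F_P$. Thus $Z_P$ is exactly the Cox exceptional locus of $F_P$, $K_\C$ is the Cox quotient group, and the displayed quotient is precisely Cox's homogeneous-coordinate realization of $X_{F_P}$. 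The resulting isomorphism is manifestly $\T^2$-equivariant.

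The main obstacle is twofold: establishing the Kempf--Ness identification in the \emph{non-compact} (unbounded) setting, where $P$ is not a compact Delzant polytope, so the reduced space and its GIT counterpart are non-compact and one must verify freeness of the $K$-action on the zero level set and the stability dictionary directly rather than quoting the classical compact statement; and verifying the combinatorial correspondence between collections of edges with a common vertex and cones of the normal fan. In dimension two the latter is transparent from the Delzant conditions in Definition~\ref{def:lapo}: two edges meeting at a vertex have interior normals forming a $\Z$-basis and hence spanning a smooth $2$-cone, a single edge yields a ray, and non-adjacent edges yield normals spanning no cone of $F_P$. Alternatively, one may bypass the quotient description and argue chart by chart, matching the affine chart $\C^2$ attached to each smooth maximal cone $\sigma_v$ of $F_P$ with the complex action-angle coordinate chart of $X_P$ around the $\T^2$-fixed point lying over the vertex $v$, and checking that the transition maps on overlaps coincide; this is more computational but entirely elementary.
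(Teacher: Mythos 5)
The paper offers no argument for this proposition at all: it is stated as ``well-known'' immediately after the observation that the Delzant-type construction in the proof of Theorem~\ref{thm:genDel} realizes $X_P$ as a K\"ahler reduction of $\C^d$, so there is no in-paper proof to compare yours against. Your route --- identify the reduced space $\phi_K^{-1}(0)/K$ with the quotient $(\C^d\setminus Z_P)/K_\C$ via Kempf--Ness, then recognize this as Cox's homogeneous-coordinate realization of $X_{F_P}$ and conclude by $\T^2$-equivariance --- is the standard proof of this folklore statement, and your outline is essentially correct. The obstacle you flag in the non-compact setting is real but is handled by known arguments rather than by re-proving GIT from scratch: for a \emph{linear} action of a subtorus $K\subset\T^d$ on $\C^d$, the Kempf--Ness function is convex along $K_\C$-orbits, which gives directly that each semistable orbit meets the shifted zero level set in exactly one $K$-orbit, with no compactness of the quotient needed; freeness of $K$ on the level set is a purely local computation at each face of $P$ involving only the normals of the edges through that face, hence is identical to the compact Delzant case and is exactly what condition (2) of Definition~\ref{def:lapo} guarantees (the standing hypothesis of finitely many vertices ensures $d<\infty$, so the construction makes sense).

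One small correction to the combinatorics: the criterion for $z\notin Z_P$ is that the facets indexed by $S=\{i: z_i=0\}$ have \emph{nonempty common intersection} in $P$, i.e.\ that they all contain a common face; your phrase ``share a common vertex'' fails when $S$ is empty or a singleton (an edge alone contributes a ray, not a vertex). Your parenthetical reformulation --- $\{\nu_i: i\in S\}$ spans a cone of $F_P$ --- is the correct general statement and is what the argument actually uses, so this is a slip of wording rather than of substance. Your proposed chart-by-chart alternative is also viable and is closer in spirit to the kind of argument the paper invokes elsewhere (cf.\ Remark~\ref{rem:cxcomplete1} and Appendix A of~\cite{m2}): each vertex $v$ of $P$ gives, via the holomorphic coordinates~(\ref{eq:hol_coords}) for $J_P$, a $\T^2$-equivariant chart matching the affine chart of the smooth $2$-cone $\sigma_v$, with unbounded edges and $\breve{P}$ supplying the remaining $\C\times\C^*$ and $(\C^*)^2$ charts, and the transition maps are the monomial maps dictated by the fan; in dimension two this is entirely elementary, as you say.
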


The next proposition shows that the smooth toric complex surfaces determined by the symplectic 
toric $4$-manifolds considered in Theorem~\ref{general_statement} are the same as the ones 
appearing in~\cite{cs}.

\begin{prop} \label{prop:complex}
Let $X$ be a strictly unbounded symplectic toric $4$-manifold. Then, as a smooth complex surface, 
$X$ can be obtained as a finite sequence of blow ups of a minimal resolution of $\bbC^2/\Gamma$,
where $\Gamma$ is a finite cyclic subgroup of $U(2)$ such that $\mathbb{C}^2/\Gamma$ has an isolated singular point at the origin.
\end{prop}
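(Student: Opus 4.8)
The plan is to translate everything into the fan $F_P$ and then run a standard toric resolution argument. By the preceding proposition, the complex surface underlying $X$ is the toric surface $X_{F_P}$, where $F_P$ is the fan in $N=\Z^2$ whose rays are the primitive inner normals $\nu_1,\dots,\nu_d$ to the edges of the moment polygon $P$, ordered as in Remark~\ref{rem:order}, and whose maximal cones are $\sigma_i:=\operatorname{Cone}(\nu_{i-1},\nu_i)$, $i=2,\dots,d$. Condition~(2) of Definition~\ref{def:lapo}, i.e.\ $\det(\nu_{i-1},\nu_i)=-1$, says that each $\sigma_i$ is spanned by a $\Z$-basis of $N$, so $X_{F_P}$ is a \emph{smooth} toric surface. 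Since $P$ is convex with exactly two unbounded edges $E_1,E_d$, the normals turn monotonically and the support of $F_P$ is the $2$-dimensional cone $\sigma:=\operatorname{Cone}(\nu_1,\nu_d)$; strict unboundedness forces $\nu_1,\nu_d$ to be non-parallel, hence linearly independent, so $\sigma$ is strongly convex (its angle is less than $\pi$).

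First I would identify the affine toric surface $U_\sigma$ attached to the single cone $\sigma$. Let $N'=\Z\nu_1+\Z\nu_d\subseteq N$. Since $\nu_1$ is primitive, the gcd of the entries of the matrix $[\,\nu_1\ \nu_d\,]$ is $1$, so its Smith normal form has trivial first elementary divisor; hence $N/N'$ is \emph{cyclic} of order $|\det(\nu_1,\nu_d)|$. The standard description of affine toric surfaces then gives $U_\sigma\cong\C^2/\Gamma$ with $\Gamma\cong N/N'$ a finite cyclic group acting diagonally, so that $\Gamma$ may be taken inside $U(2)$. Because \emph{both} generators $\nu_1,\nu_d$ of $\sigma$ are primitive, $U_\sigma$ is smooth along the two $1$-dimensional orbits and singular at most at the torus-fixed point; equivalently $\Gamma$ acts freely on $\C^2\setminus\{0\}$, so $\C^2/\Gamma$ has an isolated singularity at the origin (the case $\Gamma=\{1\}$, i.e.\ $\det(\nu_1,\nu_d)=\pm1$, simply gives the smooth $\C^2$).

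Finally I would exhibit $X_{F_P}$ as iterated blow-ups of the minimal resolution. The fan $F_P$ subdivides $\sigma$, so there is a proper birational toric morphism $\pi\colon X_{F_P}\to U_\sigma=\C^2/\Gamma$, which is a resolution of singularities because $X_{F_P}$ is smooth. Let $\widetilde X\to\C^2/\Gamma$ be the minimal resolution. By the universal property of the minimal resolution of a normal surface singularity, $\pi$ factors as $X_{F_P}\to\widetilde X\to\C^2/\Gamma$; concretely, in toric terms $\widetilde X$ corresponds to the Hirzebruch--Jung subdivision of $\sigma$, whose rays are the primitive lattice vectors on the boundary of $\operatorname{conv}\big((\sigma\cap N)\setminus\{0\}\big)$ and are therefore contained in the ray set $\{\nu_1,\dots,\nu_d\}$ of the smooth subdivision $F_P$, which yields the same factoring. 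Since $X_{F_P}\to\widetilde X$ is a proper birational morphism of smooth surfaces, it is a finite composition of blow-ups at points; hence $X\cong X_{F_P}$ is obtained from the minimal resolution of $\C^2/\Gamma$ by a finite sequence of blow-ups, as asserted.

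I expect the crux to be this last step: showing that the smooth subdivision $F_P$ genuinely \emph{refines} the minimal resolution, i.e.\ that every ray of the Hirzebruch--Jung subdivision already appears among $\nu_1,\dots,\nu_d$ (equivalently, that no exceptional divisor of $\pi$ must be contracted before reaching $F_P$). The universal property of minimal resolutions settles this abstractly, but the combinatorial verification --- comparing the continued-fraction data of $\sigma$ with the consecutive relations $\det(\nu_{i-1},\nu_i)=-1$ --- is where the real content of the proposition lies.
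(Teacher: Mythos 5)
Your proposal is correct and follows essentially the same route as the paper: both pass to the fan, identify $\C^2/\Gamma$ with the affine toric surface of the cone $\operatorname{Cone}(\nu_1,\nu_d)$ spanned by the two unbounded-edge normals, observe that $F_P$ refines this one-cone fan to get a proper birational (hence resolving) map $X\to\C^2/\Gamma$, and conclude by minimality that $X$ is an iterated blow-up of the minimal resolution. The only differences are cosmetic: the paper normalizes by a torus change of basis to normals $(0,1)$ and $(p,-q)$ and writes down the generator of $\Gamma$ explicitly, where you argue intrinsically via Smith normal form and the universal property of the minimal resolution (making explicit, via Hirzebruch--Jung, the refinement step the paper handles by citing minimality and continued fractions).
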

\begin{proof}
Let $P$ be the moment polygon of $X$. By considering a change of basis of the torus $\T^2$,
we may assume that one of the unbounded edges of $P$ is the $x_1$-axis, with interior normal 
the vector $(0,1)$, and the other unbounded edge has interior normal $(p,-q)$, with coprime
$p,q\in\N$ such that $0<q<p$. Let $\Gamma$ be the subgroup of $U(2)$ generated by 
\[ 
\begin{pmatrix}
  e^{\frac{2i\pi}{p}}&0\\
 0& e^{\frac{2i\pi q}{p}}
\end{pmatrix}.
\]
Then $Y=\bbC^2/\Gamma$ is a toric orbifold whose moment polygon has two unbounded edges with 
normals $(0,1)$ and $(p,-q)$, and no bounded edges. Denote by $X_0$ the minimal toric resolution of $Y$. The normals to the edges of the moment polygon of $X_0$ can be obtained from the continued fraction expansion of $q/p$. Minimality implies that any other toric resolution of $Y$ is an iterated blow up of $X_0$.  Now the fan of $Y$ has exactly one 2-dimensional cone: the cone determined by $(0,1)$ and $(p,-q)$. As for the fan of $X$, its 2-dimensional cones are the cones determined by the normals of adjacent edges and the union of such cones is the cone determined by $(0,1)$ and $(p,-q)$. So the fan of $X$ is a refinement of the fan of $Y$ and there must be a proper birational map
\[
X\rightarrow Y.
\] 
Since $X$ is smooth, this proves that $X$ is a resolution of $Y$ and thus an iterated blow up of $X_0$.
\end{proof}

\begin{rem}
The ALE scalar-flat K\"ahler metrics of Theorem~\ref{general_statement} are the same as the ones
of Theorem A in~\cite{cs}. The $d-2$ parameters appearing in~\cite{cs} are determined in our setting by the lengths of the bounded edges of an unbounded moment polygon with $d$ edges (see the proof of 
Theorem~\ref{precise_thm}).
\end{rem}

\subsection{Toric K\"ahler metrics}

A K\"ahler metric on a symplectic manifold $(M, \om)$ is given by a compatible
complex structure $J\in\Ii(M,\om)$, i.e. an integrable complex structure $J$ on $M$ such that
$g(\cdot,\cdot) := \om (\cdot, J\cdot)$ is a Riemannian metric. If the symplectic manifold is toric, 
a toric K\"ahler metric is given by a toric compatible complex structure $J\in\Ii^{\T}(M,\om)$, 
i.e. a compatible complex structure that is invariant by the torus action (or equivalently, for which the torus action is holomorphic).

We will now describe how toric compatible complex structures on symplectic 
toric $4$-manifolds can be parametrized using action-angle coordinates and symplectic potentials.
In fact, one easily checks that the set-up and results of~\cite{m2,m4} extend to the non-compact setting considered in this paper, provided we restrict to the following class of toric compatible complex structures.

\begin{defn} \label{def:Jcomplete}
Let $(X,\om,\tau)$ be a symplectic toric $4$-manifold and denote by $Y_1, Y_2 \in\Xx(X, \om)$
the Hamiltonian vector fields generating the $2$-torus action. A toric compatible complex structure
$J\in\Ii^{\T}(X,\om)$ is said to be \emph{complete} if the $J$-holomorphic vector fields 
$JY_1, JY_2 \in\Xx(X)$ are complete. The space of all complete toric compatible complex
structures on $(X,\om,\tau)$ will be denoted by $\Ii_c^{\T}(X,\om)$.
\end{defn}

\begin{rem} \label{rem:cxcomplete1}
Let $P\subset\R^2$ be a moment polygon and $(X_P,\om_P,J_P,\tau_P)$ its
associated smooth toric K\"ahler surface. As in~\cite{m2}, Appendix A, one can prove that if
$J\in \Ii_c^{\T}(X_P,\om_P)$ is any complete compatible toric complex structure then
$(X_P,J_P,\tau_P)$ and $(X_P, J, \tau_P)$ are isomorphic smooth toric complex surfaces.
\end{rem}

\begin{rem} \label{rem:cxcomplete2}
Note that there is no immediate relation between completeness of a toric compatible complex 
structure and completeness of the associated toric K\"ahler metric. For example, consider again
$S^2\times \bbR^2$ with the scalar-flat toric K\"ahler metric given by the round metric on $S^2$ times the hyperbolic metric on $\bbR^2$. Although this metric is complete, the associated
complex structure $J$ is not. In fact, $(S^2\times \bbR^2, J)$ is biholomorphic to 
$\cp^1\times D$, where $D\subset\C$ is a disc. 
\end{rem}

Let $P\subset\R^2$ be a moment polygon and $(X_P,\om_P,\tau_P)$ its
associated symplectic toric $4$-manifold with moment map $\mu_P:X_P\to P$.
Let $\breve{P}$ denote the interior of $P$, and consider $\breve{X}_P\subset X_P$ 
defined by $\breve{X}_P = \mu_P^{-1} (\breve{P})$. One can easily check that $\breve{X}_P$ 
is a smooth open dense subset of $X_P$, consisting of all the points where the 
$\T^2$-action is free. It can be described as
\[
\breve{X}_P\cong \breve{P}\times \T^2 =
\left\{ (x,\theta): x = (x_1,x_2)\in\breve{P}\subset\R^2\,,\ 
\theta = (\theta_1,\theta_2) \in\R^2/2\pi\Z^2\right\}\,,
\]
where $(x,\theta)$ are symplectic \emph{action-angle} coordinates for
$\om_P$, i.e.
\[
\om_P = dx_1\wedge d\theta_1 + dx_2 \wedge d\theta_2\ .
\]

If $J$ is any complete $\om_P$-compatible toric complex structure on $X_P$,
the symplectic $(x,\theta)$-coordinates on $\breve{X}_P$ can be chosen
so that the matrix that represents $J$ in these coordinates has the form
\[
\begin{bmatrix}
\phantom{-}0\ \  & \vdots & -U^{-1}  \\
\hdotsfor{3} \\
\phantom{-}U\ \  & \vdots & 0\,
\end{bmatrix}
\]
where $U=U(x)=\left[u_{jk}(x)\right]_{j,k=1}^{2,2}$,
is a symmetric and positive-definite real matrix. The integrability condition for the
complex structure $J$ is equivalent to $U$ being the Hessian of a 
smooth function $\s\in C^\infty (\breve{P})$, i.e.
\[
U = \Hess_x (\s)\,,\ u_{jk}(x) = \frac{\p^2 \s}{\p x_j \p x_k} (x)\,,\ 
1\leq j,k \leq 2\,.
\]
Holomorphic coordinates for $J$ are given in this case by
\begin{equation} \label{eq:hol_coords}
z(x,\theta) = \xi (x) + i \theta = \frac{\p \s}{\p x}(x)
+ i\theta\ .
\end{equation}
We will call $\s$ the \emph{symplectic potential} of the 
compatible toric complex structure $J$. Note that the metric
$g(\cdot,\cdot) = \om_P(\cdot,J\cdot)$ is given in these
$(x,\theta)$-coordinates by the matrix
\begin{equation} \label{metricG}
\begin{bmatrix}
\phantom{-}\Hess(\s) & \vdots & 0\  \\
\hdotsfor{3} \\
\phantom{-}0 & \vdots & \Hess^{-1}(\s)
\end{bmatrix}
\end{equation}

We will now characterize the symplectic potentials that correspond 
to complete toric compatible complex structures on a symplectic toric $4$-manifold 
$(M_P,\om_P,\tau_P)$. Every moment polygon $P\subset \R^2$ can be described 
by a set of inequalities of the form
\[
\ell_i (x) \equiv \langle x, \nu_i\rangle + \la_i \geq 0\,,\ i=1,\ldots,d,
\]
where $d$ is the number of edges of $P$, each $\nu_i$ is a 
primitive element of the lattice $\Z^2\subset\R^2$ (the inward-pointing
normal to the $i$-th edge of P), and each $\la_r$ is a real number.
Then $x\in P$ belongs to the $i$-th edge iff $\ell_i (x) = 0$, 
and $x\in\breve{P}$ iff $\ell_i(x) > 0$ for all $i=1,\ldots,d$.

The following theorem follows from a result of Guillemin~\cite{g1}.
\begin{theorem} \label{thm1}
Let $(X_P,\om_P,\tau_P)$ be the symplectic toric $4$-manifold associated
to a moment polygon $P\subset \R^2$. Then the canonical compatible toric
complex structure $J_P$ is complete and, in suitable action-angle $(x,\theta)$-coordinates 
on $\breve{X}_P\cong\breve{P}\times \T^2$, its symplectic potential $\s_P\in C^\infty (\breve{P})$ 
is given by
\[
\s_P (x) = \frac{1}{2} \sum_{i=1}^{d} \ell_i(x) \log \ell_i (x)\ .
\]
\end{theorem}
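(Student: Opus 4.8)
The plan is to establish the Guillemin formula by verifying directly that the stated symplectic potential $\s_P$ produces the canonical complex structure $J_P$ arising from K\"ahler reduction of the standard $\C^d$. Recall from the construction in Theorem~\ref{thm:genDel} that $(X_P,\om_P,J_P,\tau_P)$ is obtained by symplectic reduction of $(\C^d,\om_{\mathrm{std}})$ by a subtorus $N\subset\T^d$ whose Lie algebra is the kernel of the map $\R^d\to\R^2$ sending the $i$-th standard basis vector to $\nu_i$. The flat K\"ahler structure on $\C^d$ has, in its own action-angle coordinates $(a,\phi)$ (where $a_i = |z_i|^2/2$ are the standard moment map components), the symplectic potential $\s_{\mathrm{std}}(a) = \frac12\sum_{i=1}^d a_i\log a_i$. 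The core idea is that symplectic reduction passes the ambient symplectic potential down to the quotient by restricting to the affine slice cut out by the moment map constraint, and that $\ell_i(x)$ are precisely the pullbacks of the coordinates $a_i$ under this identification.

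First I would set up the linear-algebra dictionary: choose an integral isomorphism identifying the moment polygon inequalities $\ell_i(x)=\langle x,\nu_i\rangle+\lambda_i\geq 0$ with the restriction of the coordinate functions $a_i\geq 0$ on $\C^d$ to the reduced level set, so that on $\breve X_P$ one has $a_i = \ell_i(x)$ under the inclusion $\breve{X}_P \hookrightarrow \C^d /\!\!/ N$. Second, I would invoke the general principle (this is exactly the reduction argument in Guillemin~\cite{g1}, adapted to the noncompact setting, which is permitted since the excerpt asserts the methods of~\cite{m2,m4} extend provided we restrict to complete structures) that the reduced symplectic potential is obtained by substituting $a_i = \ell_i(x)$ into $\s_{\mathrm{std}}$; this yields
\[
\s_P(x) = \frac12\sum_{i=1}^d \ell_i(x)\log\ell_i(x),
\]
as claimed. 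Third, I would check that this candidate genuinely defines a compatible toric complex structure on the open dense $\breve X_P$: since each $\ell_i$ is affine, a direct computation gives $\Hess_x(\s_P) = \frac12\sum_i \nu_i\nu_i^T/\ell_i(x)$, which is manifestly symmetric; positive-definiteness on $\breve P$ follows from the fact that the $\nu_i$ span $\R^2$ and each $\ell_i>0$ in the interior, so the matrix is a positive combination of rank-one positive-semidefinite pieces that together span.

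The genuine work is in the boundary analysis: I must verify that the complex structure determined by $\s_P$ extends smoothly across $\partial P$ to all of $X_P$ (so that we recover the canonical $J_P$ rather than some structure degenerate at the boundary) and that the resulting $J_P$ is complete in the sense of Definition~\ref{def:Jcomplete}. This is where the Delzant conditions in Definition~\ref{def:lapo} enter decisively: condition~(2), that the normals of any two adjacent edges form a $\Z$-basis, is exactly what guarantees the correct smooth (rather than orbifold) behaviour at each vertex, while the logarithmic singularity of $\s_P$ along each edge is precisely the one that caps off a vanishing angle coordinate to produce a smooth point. Concretely, near a vertex where edges $i,i+1$ meet, one uses the $\Z$-basis property to make an integral change of variables bringing $(\ell_i,\ell_{i+1})$ to standard coordinates, reducing the local model to the standard $\C^2$ chart where $\frac12(a_1\log a_1 + a_2\log a_2)$ is known to give the flat smooth structure; the remaining terms $\ell_j$ with $j\neq i,i+1$ are bounded away from zero and contribute smoothly. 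I expect this boundary/completeness verification to be the main obstacle, since it requires tracking the singular behaviour of $\Hess(\s_P)^{-1}$ as $x\to\partial P$ and confirming via holomorphic coordinates~\eqref{eq:hol_coords} that $\breve X_P$ compactifies correctly to $X_P$; the interior computation, by contrast, is routine once the reduction dictionary is in place.
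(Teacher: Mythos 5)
Your reduction argument is exactly the route of the result the paper invokes: the paper offers no proof of this theorem beyond the citation to Guillemin~\cite{g1}, and your plan (symplectic potential $\frac12\sum a_i\log a_i$ on $\C^d$, restriction to the affine slice $a_i=\ell_i(x)$ cut out by the moment map of the subtorus $N$, Hessian computation $\Hess_x(\s_P)=\frac12\sum_i\nu_i\nu_i^T/\ell_i$, vertex analysis via the $\Z$-basis condition) is a faithful reconstruction of that proof, adapted to the non-compact setting as the paper permits. Two harmless caveats: the restriction principle is valid only modulo affine functions of $x$, which do not affect the metric or complex structure, so the formula should be read up to such terms; and your positive-definiteness argument needs the $\nu_i$ to span $\R^2$, which follows from condition (2) of Definition~\ref{def:lapo} whenever $P$ has at least one vertex (for a vertex-free half-plane or strip the stated formula itself degenerates, so this case is implicitly excluded).

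The one genuine gap is completeness of $J_P$ in the sense of Definition~\ref{def:Jcomplete}. Your boundary analysis --- smooth extension of the complex structure across edges and vertices via local standard charts --- is the right tool for the extension claim, but it cannot yield completeness, which is a global statement about the flows of $JY_1,JY_2$ for all time. The paper's own Remark~\ref{rem:cxcomplete2} supplies the counter-illustration: on $S^2\times\R^2$ the round $\times$ hyperbolic structure has a compatible $J$ that is perfectly smooth everywhere (and the metric is even complete), yet $J$ is incomplete because $(S^2\times\R^2,J)\cong\cp^1\times D$. So local regularity near $\partial P$, which is all your sketch verifies, does not suffice. To close this you need a global argument, and two are available. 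Either identify $(X_P,J_P)$ with the toric variety $X_{F_P}$ (the proposition in Section 2.2 of the paper), on which the holomorphic $(\C^*)^2$-action integrates the vector fields $Y_i - iJY_i$, so $JY_1,JY_2$ are automatically complete; or use the criterion from the proof of Proposition~\ref{prop:Jcomplete}: since $J\frac{\partial}{\partial\theta_i}=-\frac{\partial}{\partial\xi_i}$ in the holomorphic coordinates~(\ref{eq:hol_coords}), completeness is equivalent to surjectivity of the Legendre map $\xi=\partial\s_P/\partial x:\breve P\to\R^2$, which for the Guillemin potential follows from the asymptotics $\xi\sim\frac12\nu_j\log\ell_j$ along each edge $E_j$ (for the strip-like polygon with normals $(0,1),(1,0),(0,-1)$ one checks directly that both components of $\xi$ range over all of $\R$, consistent with the theorem holding even when $P$ is not strictly unbounded). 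With either argument supplied, your proposal becomes a complete proof along the lines of the cited source.
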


The next theorem provides the symplectic version of the 
$\p\op$-lemma in this toric context and is an immediate extension
to our complete non-compact setting of the compact version proved
in~\cite{m2} (see~\cite{m4} for the compact orbifold version).

\begin{theorem} \label{thm2}
Let $J$ be any complete compatible toric complex structure on the 
symplectic toric $4$-manifold $(X_P,\om_P,\tau_P)$. Then, in suitable
action-angle $(x,\theta)$-coordinates on $\breve{X}_P\cong\breve{P}\times \T^2$, 
$J$ is given by a symplectic potential $\s\in C^\infty(\breve{P})$ of the form
\[
\s(x) = \s_P (x) + h(x)\,,
\]
where $\s_P$ is given by Theorem~\ref{thm1}, $h$ is smooth on the whole
$P$, and the matrix $\Hess(\s)$ is positive definite on $\breve{P}$
and has determinant of the form
\[
\Det(\Hess(\s)) = \left(\de \prod_{i=1}^d \ell_r \right)^{-1}\,,
\]
with $\de$ being a smooth and strictly positive function on the whole $P$.

Conversely, any such potential $\s$ determines a (not necessarily complete) complex 
structure on $\breve{X}_P\cong\breve{P}\times \T^2$, that extends uniquely 
to a well-defined compatible toric complex structure $J$ on the symplectic 
toric $4$-manifold $(X_P,\om_P,\tau_P)$.
\end{theorem}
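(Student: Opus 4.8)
The plan is to adapt the compact argument of~\cite{m2} with essentially no change, the crucial observation being that both the characterization of admissible symplectic potentials and the extension of $J$ across $\mu_P^{-1}(\p P)$ are \emph{local} near the boundary strata, and that the transverse geometry near an unbounded edge (away from its unique vertex) is identical to that near a bounded edge. The only global input specific to the non-compact setting is the identification of the underlying complex manifold, which is supplied by the completeness hypothesis through Remark~\ref{rem:cxcomplete1}.

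For the forward direction, given a complete compatible toric complex structure $J$, I would first invoke Remark~\ref{rem:cxcomplete1} to identify $(X_P,J,\tau_P)$ with the canonical complex surface $(X_P,J_P,\tau_P)$, so that in particular the metric $g=\om_P(\cdot,J\cdot)$ is smooth on all of $X_P$. Choosing action-angle coordinates adapted to $J$, integrability and compatibility produce a symplectic potential $\s\in C^\infty(\breve P)$ with $\Hess(\s)$ positive definite, and $g$ takes the block form~(\ref{metricG}). Writing $h=\s-\s_P$, with $\s_P$ the canonical potential of Theorem~\ref{thm1}, I would show $h$ extends smoothly to all of $P$ by a local analysis at $\p P$: smoothness of $g$ on $X_P$ near $\mu_P^{-1}(E_i)$ forces $\s$ to carry exactly the same $\tfrac12\ell_i\log\ell_i$ singularity as $\s_P$, so the difference is smooth up to the edge. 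The determinant statement $\Det(\Hess(\s))=(\de\prod_{i=1}^d\ell_i)^{-1}$ with $\de>0$ smooth would then follow by comparing the Riemannian volume form of $g$, smooth and nonvanishing on $X_P$, with the action-angle volume $\Det(\Hess(\s))\,dx\,d\theta$; the factor $\prod\ell_i$ records the order of vanishing of $g$ transverse to the collapsing circles along the edges.

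For the converse, given $\s=\s_P+h$ with the stated properties, I would define $J$ on $\breve X_P$ by the matrix formula with $U=\Hess(\s)$; integrability is automatic and positive-definiteness of $\Hess(\s)$ gives compatibility, so $J$ is a well-defined compatible toric complex structure on the open dense $\breve X_P$. The substantive step is the unique smooth extension across $\mu_P^{-1}(\p P)$. Near an interior point of a single edge $E_i$ the log-singularity of $\s_P$ makes the $\nu_i$-component of $\xi=\p\s/\p x$ blow up logarithmically as $\ell_i\to0$, at exactly the rate for which the toric holomorphic coordinate vanishing along the circle generated by $\nu_i$ extends smoothly across its zero locus; smoothness of $h$ leaves this rate undisturbed, so $J$ extends there precisely as in the compact case. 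Near a vertex, condition~(2) of Definition~\ref{def:lapo} lets me reduce to the standard $\C^2$-chart, and the determinant condition guarantees the extension is nondegenerate there, while uniqueness is immediate from continuity on the dense set $\breve X_P$.

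The hard part will be this boundary extension analysis---verifying that the prescribed singular behaviour of $\s_P$ reproduces exactly the smooth toric complex structure on $X_P$, rather than some milder degeneration. The sole new feature relative to~\cite{m2} is the presence of unbounded edges; but since the extension is a local statement and the transverse model along an unbounded edge coincides with that of a bounded one, I expect nothing further to be required there. Completeness enters only in the forward direction, to pin down the complex manifold, which is why the converse should yield a compatible complex structure that need not itself be complete.
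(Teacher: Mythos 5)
Your proposal is correct and follows exactly the route the paper itself takes: the paper gives no detailed proof, stating only that the theorem is an immediate extension of the compact case of~\cite{m2} (with the determinant condition as in~\cite{m4}), with completeness entering precisely through Remark~\ref{rem:cxcomplete1} to pin down the underlying complex surface. Your observations that the boundary-extension analysis is local (so unbounded edges behave like bounded ones) and that completeness is needed only in the forward direction, not the converse, are exactly the points the paper relies on implicitly.
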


\begin{rem} 
As we will see, the metrics we refer to in Theorem \ref{general_statement} correspond to complete
compatible toric complex structures.
\end{rem}

\subsection{Scalar curvature}

We now recall from~\cite{m1} a particular formula for the scalar
curvature in action-angle $(x,\theta)$-coordinates. A K\"ahler metric of the
form~(\ref{metricG}) has scalar curvature $s$ given by
\[
s = - \sum_{j,k} \frac{\p}{\p x_j}
\left( u^{jk}\, \frac{\p \log \Det(\Hess(\s))}{\p x_k} \right)\,,
\]
which after some algebraic manipulations becomes the more compact
\begin{equation} \label{scalarsymp2}
s = - \sum_{j,k} \frac{\p^2 u^{jk}}{\p x_j \p x_k}\,, 
\end{equation}
where the $u^{jk},\ 1\leq j,k\leq 2$, are the entries of the inverse 
of the matrix $ \Hess_x (\s)$, $\s\equiv$ symplectic potential.

\section{Joyce's construction in action-angle coordinates}\label{sec:donaldson}

In \cite{j}, Joyce constructs local scalar-flat K\"ahler metrics with torus symmetry 
on $\mathbb{R}^4$. In this section we recall Donaldson's action-angle coordinates 
version of Joyce's construction and discuss some solutions of a relevant PDE which 
is used in it.

The main ingredient is a pair of linearly independent solutions of the PDE
\begin{equation}\label{PDE}
 \frac{\partial^2 \xi}{\partial H^2}+\frac{\partial^2\xi}{\partial r^2}+\frac{1}{r}\frac{\partial \xi}{\partial r }=0,
\end{equation}
on $\mathbb{H}=\{(H,r)\in\R^2:r > 0\}$. The main theorem is the following.

\begin{thm}[Donaldson,\cite{d1}]\label{donaldson's}
Let $\xi_1$ and $\xi_2$ be two solutions of equation (\ref{PDE}). Let
\begin{displaymath}
 \epsilon_1=r\left(\frac{\partial \xi_2}{\partial r} dH-\frac{\partial \xi_2}{\partial H} dr \right)
\end{displaymath}
and
\begin{displaymath}
 \epsilon_2=-r\left(\frac{\partial \xi_1}{\partial r} dH-\frac{\partial \xi_1}{\partial H} dr \right).
\end{displaymath}
Then these two $1$-forms are closed. Let $x_1$ and $x_2$ denote their primitives, well defined up to a constant. Then $(x_1,x_2)$ are local coordinates in $\mathbb{R}^2$. Let
\begin{displaymath}
 \epsilon=\xi_1 dx_1+\xi_2 dx_2.
\end{displaymath}
This $1$-form is also closed. Let $\s$ be a primitive of $\epsilon$ and write $\xi=(\xi_1,\xi_2)$. Then, if $\det D\xi>0$, where
\[
D\xi=\begin{pmatrix}
\xi_{1,H} & \xi_{1,r}  \\
\xi_{2,H} & \xi_{2,r}  
\end{pmatrix},
\]    
the function $\s$ is a local symplectic potential for some toric K\"ahler metric on $\mathbb{R}^4$ whose scalar curvature is $0$.
\end{thm}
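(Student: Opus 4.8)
The plan is to verify the listed claims one at a time, each time reducing to the single equation~(\ref{PDE}), and to single out the vanishing of the scalar curvature as the one step carrying real content. First I would prove the two closedness statements. Writing $\epsilon_1 = r\,\xi_{2,r}\,dH - r\,\xi_{2,H}\,dr$ and differentiating,
\[
d\epsilon_1 = \left(\frac{\partial}{\partial r}(r\,\xi_{2,r}) + \frac{\partial}{\partial H}(r\,\xi_{2,H})\right) dr\wedge dH = r\left(\xi_{2,rr} + \xi_{2,HH} + \frac{1}{r}\,\xi_{2,r}\right) dr\wedge dH,
\]
which vanishes exactly because $\xi_2$ solves~(\ref{PDE}); the argument for $\epsilon_2$ is identical. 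As $\mathbb{H}$ is simply connected, primitives $x_1,x_2$ exist. To see $(x_1,x_2)$ form local coordinates I would read the Jacobian $Dx$ of $(H,r)\mapsto(x_1,x_2)$ off $dx_i=\epsilon_i$ and compute $\det Dx = r^2\det D\xi$, which is positive since $r>0$ and $\det D\xi>0$. For $\epsilon=\xi_1\,dx_1+\xi_2\,dx_2$ I would expand $d\epsilon = d\xi_1\wedge dx_1 + d\xi_2\wedge dx_2$ in the $(H,r)$ frame; the two terms equal $\mp r(\xi_{1,H}\xi_{2,H}+\xi_{1,r}\xi_{2,r})\,dH\wedge dr$ and cancel, so a primitive $\s$ exists with $\partial\s/\partial x_j=\xi_j$, in agreement with~(\ref{eq:hol_coords}).

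Next I would certify that $\s$ is an honest symplectic potential. Since $\xi=\partial\s/\partial x$, the chain rule gives $\Hess_x(\s) = D\xi\,(Dx)^{-1}$, and because $(Dx)^{-1} = (r\det D\xi)^{-1}(D\xi)^{T}$ the Hessian takes the manifestly symmetric form
\[
\Hess_x(\s) = \frac{1}{r\det D\xi}\,D\xi\,(D\xi)^{T},
\]
which is positive definite precisely when $r\det D\xi>0$. By the correspondence of Section~\ref{sec:unbounded} (Theorem~\ref{thm2} and the metric~(\ref{metricG}) attached to a positive-definite $\Hess(\s)$), this exhibits $\s$ as the symplectic potential of a toric K\"ahler metric on the corresponding piece of $\mathbb{R}^4$. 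The decisive by-product is the determinant
\[
\Det(\Hess_x(\s)) = \frac{\det\!\big(D\xi\,(D\xi)^{T}\big)}{(r\det D\xi)^2} = \frac{1}{r^2},
\]
so that $\log\Det(\Hess_x(\s)) = -2\log r$ is a function of $r$ alone.

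Finally, for the scalar curvature I would use the divergence form of the first scalar-curvature formula of Section~\ref{sec:unbounded}, namely $s = -\sum_j \partial_{x_j} W^j$ with $W^j = \sum_k u^{jk}\,\partial_{x_k}\log\Det(\Hess_x(\s))$. Since the coordinate divergence obeys $\sum_j \partial_{x_j}W^j = \La^{-1}\big(\partial_H(\La\,W^H)+\partial_r(\La\,W^r)\big)$ with $\La=r^2\det D\xi$ the Jacobian, I would recompute the components of $W$ in the $(H,r)$ frame. Feeding in $\log\Det(\Hess_x(\s))=-2\log r$ and the Wronskian-type identity $\xi_{1,H}\xi_{2,r}-\xi_{1,r}\xi_{2,H}=\det D\xi$, the contraction of the inverse Hessian $[u^{jk}]$ against $\nabla_x r$ collapses, yielding $\La\,W^H=0$ and $\La\,W^r=-2$. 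Both terms in the transported divergence then differentiate to zero, giving $s=0$.

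I expect the scalar-curvature step to be the main obstacle, since it is the only place where the algebra does not close purely formally: one must use that $\Det(\Hess_x(\s))$ depends on $r$ only and that contracting the inverse Hessian with $\nabla_x r$ produces a vector field with vanishing axial component and constant radial flux $\La\,W^r\equiv-2$. Every cancellation ultimately traces back to~(\ref{PDE}), so the whole argument is powered by that one equation.
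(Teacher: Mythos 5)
The paper never proves this theorem: it is stated as a recollection of Donaldson's result and simply cited to~\cite{d1}, so there is no internal proof to compare against; what your argument does is supply a correct self-contained verification, and I checked it line by line. The closedness computations are right ($d\epsilon_1 = r\bigl(\xi_{2,rr}+\xi_{2,HH}+\tfrac{1}{r}\xi_{2,r}\bigr)\,dr\wedge dH$, and the two terms of $d\epsilon$ cancel as you say), as is $\det Dx = r^2\det D\xi$. Your factorization $\Hess_x(\s) = \frac{1}{r\det D\xi}\,D\xi\,(D\xi)^{T}$ is a nice touch: it gives symmetry and positive-definiteness in one stroke and yields $\Det(\Hess_x(\s)) = r^{-2}$, which is exactly the identity $r = (\det\Hess_x(\s))^{-1/2}$ that the paper later imports from~\cite{d1} in the proof of Theorem~\ref{precise_thm}. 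For the scalar-curvature step I recomputed your transported divergence using the explicit expression for $\Hess^{-1}(\s)$ that appears in the proof of Proposition~\ref{prop:circle}: one indeed gets $W^H = 0$ and $W^r = -2/(r^2\det D\xi)$, so $\La W^H \equiv 0$ and $\La W^r \equiv -2$ with $\La = r^2\det D\xi$, and $s=0$ follows from the first scalar-curvature formula of Section~\ref{sec:unbounded}; every cancellation there is an exact consequence of $\xi_{1,H}\xi_{2,r}-\xi_{1,r}\xi_{2,H}=\det D\xi$, as you claim. Two cosmetic points only: the appeal to Theorem~\ref{thm2} is slightly off target, since that theorem concerns global boundary behaviour on $X_P$, whereas for this purely local statement you need only the Section~\ref{sec:unbounded} dictionary (integrability of $J$ is equivalent to $U$ being a Hessian, which holds by construction since $\xi = \partial\s/\partial x$, and positive-definite $U$ gives the metric~(\ref{metricG})); and note that the theorem asserts $(x_1,x_2)$ are local coordinates before the hypothesis $\det D\xi>0$ is announced, so your use of that hypothesis at that point is the honest reading of the statement rather than a gap.
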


There are some obvious solutions to equation (\ref{PDE}):
\begin{itemize}
 \item Any affine function of $H$, namely $\xi=aH+b$ with $a,b\in\R$ constants. These are the only 
 $r$-independent solutions.
\item The only $H$-independent solutions are $\xi=a\log(r)+b$ with $a,b\in\R$ constants.
\item Another important solution is 
\begin{displaymath}
 \xi=\frac{1}{2}\log\left( H+a+\sqrt{(H+a)^2+r^2}\right),
\end{displaymath}
for any given constant $a\in\R$. This solution satisfies the following important property

\begin{prop}\label{asymp_sol}
As $r$ tends to zero the above solution is asymptotic to
$$
\begin{cases}
\log(r)+O(1), & \text{if $H<-a$}\\
O(1) & \text{if $H>-a$.}
\end{cases}
$$
\end{prop}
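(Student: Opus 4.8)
The plan is to analyze the explicit solution
\[
\xi(H,r)=\tfrac{1}{2}\log\!\left(H+a+\sqrt{(H+a)^2+r^2}\right)
\]
directly as $r\to 0$, treating the two cases $H<-a$ and $H>-a$ separately. Since the constant $a$ only shifts $H$, I would first set $s=H+a$ to lighten notation, so the claim becomes that $\tfrac12\log(s+\sqrt{s^2+r^2})$ behaves like $\log r+O(1)$ when $s<0$ and like $O(1)$ when $s>0$, as $r\to 0$.

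For the case $s>0$ (i.e. $H>-a$), as $r\to 0$ the quantity $s+\sqrt{s^2+r^2}\to s+|s|=2s$, which is a fixed positive number, so $\xi\to\tfrac12\log(2s)$, a finite limit; hence $\xi=O(1)$ uniformly on compact subsets of $\{s>0\}$. This case is immediate.

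The case $s<0$ (i.e. $H<-a$) is where the logarithmic blow-up appears, and this is the main point to handle carefully. Here $\sqrt{s^2+r^2}\to |s|=-s$, so $s+\sqrt{s^2+r^2}\to 0$ as $r\to 0$, and I must extract the rate. The plan is to rationalize: write
\[
s+\sqrt{s^2+r^2}=\frac{(s+\sqrt{s^2+r^2})(\sqrt{s^2+r^2}-s)}{\sqrt{s^2+r^2}-s}
=\frac{r^2}{\sqrt{s^2+r^2}-s}.
\]
As $r\to 0$ the denominator tends to $\sqrt{s^2}-s=-s-s=-2s>0$, so
\[
s+\sqrt{s^2+r^2}=\frac{r^2}{-2s}\bigl(1+o(1)\bigr).
\]
Taking logarithms,
\[
\xi=\tfrac12\log\!\left(\frac{r^2}{-2s}\right)+o(1)
=\log r-\tfrac12\log(-2s)+o(1)=\log r+O(1),
\]
which is exactly the asserted asymptotics.

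I do not anticipate a genuine obstacle here; the only care needed is the bookkeeping of the $O(1)$ term. The honest statement is that the $O(1)$ in the $s<0$ case is not a universal constant but the $H$-dependent quantity $-\tfrac12\log(-2(H+a))$ plus an $o(1)$ error, and one should note this is bounded on any region bounded away from $H=-a$. Thus the cleanest write-up fixes $H\neq -a$ and lets $r\to 0$, recording that the implied constants in $O(1)$ are locally uniform in $H$ on each of the two half-lines, which is all that the subsequent construction requires.
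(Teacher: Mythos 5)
Your proof is correct and essentially identical to the paper's: the rationalization identity $s+\sqrt{s^2+r^2}=r^2/(\sqrt{s^2+r^2}-s)$ is exactly the paper's step of multiplying and dividing the argument of the $\log$ by $-(H+a)+\sqrt{(H+a)^2+r^2}$, and the $s>0$ case is handled the same way in both. Your added remark that the $O(1)$ term is locally uniform in $H$ away from $H=-a$ is a harmless refinement the paper leaves implicit.
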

\begin{proof} When $r$ is close to $0$, 
\[
H+a+\sqrt{(H+a)^2+r^2}\quad\text{is close to}\quad H+a+|H+a|\,,
\]
so that 
\begin{displaymath}
 \log\left( H+a+\sqrt{(H+a)^2+r^2}\right)
 \quad\text{is $O(1)$ when $H+a>0$.}
\end{displaymath}
By dividing and multiplying the argument of the $\log$ by \begin{displaymath}
-(H+a)+\sqrt{(H+a)^2+r^2},
\end{displaymath}
the function $\xi$ can be written as
\begin{displaymath}
\log(r)-\frac{1}{2}\log\left(-(H+a)+\sqrt{(H+a)^2+r^2}\right).
\end{displaymath}
When $r$ is small
\begin{displaymath}
-(H+a)+\sqrt{(H+a)^2+r^2} \quad\text{is close to} \quad -(H+a)+|H+a|
\end{displaymath} 
so that 
\begin{displaymath}
\log\left(-(H+a)+\sqrt{(H+a)^2+r^2}\right)\quad\text{is $O(1)$ when $H+a<0$.}
\end{displaymath}
\end{proof}

\item An analogous solution to the above is 
\begin{displaymath}
 \xi=\frac{1}{2}\log\left( -(H+a)+\sqrt{(H+a)^2+r^2}\right),
\end{displaymath}
whose behavior near $r=0$ is given by
$$
\begin{cases}
O(1), & \text{if $H<-a$}\\
\log(r)+O(1) & \text{if $H>-a$.}
\end{cases}
$$
\end{itemize}

\section{The construction of the metrics}
\label{sec:proof}

Let $X$ be a symplectic toric $4$-manifold whose moment polygon $P$ is unbounded. 
The purpose of this section is to use Donaldson's action-angle coordinates
version of Joyce's construction to give a method for obtaining explicit symplectic potentials for scalar-flat toric K\"ahler metrics on $X$. 
More precisely, we will prove the following theorem.

\begin{thm}\label{precise_thm}
Let $X$ be an unbounded symplectic toric $4$-manifold and $P$ its moment polygon. 
Let $d$ be the number of edges of $P$. Let $\nu_i=(\alpha_i,\beta_i)\in\Z^2$, $i=1,\cdots, d$, 
be the primitive interior normals to the edges of $P$, ordered according to Remark~\ref{rem:order}. Let $\nu = (\alpha,\beta)$ be a vector in $\bbR^2$ such that
\begin{equation} \label{eq:condnu}
\det(\nu,\nu_1),\,\,\det(\nu,\nu_d)\geq0.
\end{equation}
Set 
\begin{displaymath}
\xi_1 = \alpha_1\log(r)+\frac{1}{2}\sum_{i=1}^{d-1}(\alpha_{i+1}-\alpha_{i})\log\left( H+a_i+\sqrt{(H+a_i)^2+r^2}\right)+\alpha H
\end{displaymath}
and
\begin{displaymath}
\xi_2 = \beta_1\log(r)+\frac{1}{2}\sum_{i=1}^{d-1}(\beta_{i+1}-\beta_{i})\log\left( H+a_i+\sqrt{(H+a_i)^2+r^2}\right)+\beta H,
\end{displaymath}
where $a_1<\cdots<a_{d-1}$ are real numbers determined by the moment polygon $P$.
Let
\begin{displaymath}
 \epsilon_1 = r\left(\frac{\partial \xi_2}{\partial r} dH-\frac{\partial \xi_2}{\partial H} dr \right)
\quad\text{and}\quad
 \epsilon_2 = -r\left(\frac{\partial \xi_1}{\partial r} dH-\frac{\partial \xi_1}{\partial H} dr \right).
\end{displaymath}
Then these two $1$-forms are closed and their primitives $x_1$ and $x_2$ define symplectic action coordinates on $\breve{P}$ for some scalar-flat toric K\"ahler metric on $X$, whose symplectic 
potential $\s$ satisfies
\begin{displaymath}
 d\s=\xi_1 dx_1 + \xi_2 dx_2.
\end{displaymath}
\end{thm}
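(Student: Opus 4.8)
The plan is to reduce the statement to Donaldson's Theorem~\ref{donaldson's} together with the boundary characterization of symplectic potentials in Theorem~\ref{thm2}, using the asymptotic analysis of Proposition~\ref{asymp_sol} as the bridge between the two. First I would observe that equation~(\ref{PDE}) is linear and that every summand appearing in $\xi_1$ and $\xi_2$ — the term $\log r$, the terms $\tfrac12\log(H+a_i+\sqrt{(H+a_i)^2+r^2})$, and the affine term $H$ — is one of the explicit solutions listed after Theorem~\ref{donaldson's}. Hence $\xi_1,\xi_2$ solve~(\ref{PDE}), the forms $\epsilon_1,\epsilon_2$ are closed, and Donaldson's construction applies verbatim: once $\det D\xi>0$ is known, the primitive $\s$ of $\epsilon=\xi_1\,dx_1+\xi_2\,dx_2$ is a local symplectic potential of a scalar-flat toric K\"ahler metric satisfying $d\s=\xi_1\,dx_1+\xi_2\,dx_2$, as claimed.

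The first genuine computation is the positivity $\det D\xi>0$ on $\mathbb{H}$. Writing $\xi=(\xi_1,\xi_2)$ as a combination of the basis solutions with coefficient vectors $\nu_1$ (for $\log r$), $\nu_{i+1}-\nu_i$ (for the $i$-th logarithmic term) and $\nu$ (for $H$), the determinant $\det D\xi$ expands bilinearly into a sum of pairwise Wronskians of the basis solutions weighted by the $2\times2$ determinants of the corresponding coefficient vectors. I would compute each Wronskian explicitly — each has a definite sign on $r>0$ — and then invoke the ordering relations $\det(\nu_{i-1},\nu_i)=-1$ from Remark~\ref{rem:order} together with the hypotheses $\det(\nu,\nu_1),\det(\nu,\nu_d)\ge0$ of~(\ref{eq:condnu}) to show that all terms contribute with the same sign. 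This both validates Donaldson's theorem and shows that $x=(x_1,x_2)$ is a local diffeomorphism.

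Next I would identify the image of $x$ with $\breve P$ and pin down the boundary behaviour, where Proposition~\ref{asymp_sol} is the decisive tool. As $r\to0$ with $H$ in the $k$-th interval $(-a_k,-a_{k-1})$ (the two extreme regions $H<-a_{d-1}$ and $H>-a_1$ being rays), the logarithmic terms telescope to give $\xi\sim\nu_k\log r$. Thus the $d$ boundary segments of $\partial\mathbb{H}$ cut out by the marked points $H=-a_i$ map to the $d$ edges of $P$, with $E_1,E_d$ the images of the two unbounded rays and interior normal exactly $\nu_k$ along $E_k$. Choosing $a_1<\cdots<a_{d-1}$ and the integration constants so that the resulting edge lengths and support numbers $\la_i$ match $P$ is what is meant by ``$a_i$ determined by $P$''; together with the positivity of $\det D\xi$ this upgrades $x$ to a diffeomorphism $\mathbb{H}\to\breve P$, establishing the action coordinates.

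Finally I would verify that $\s$, viewed as a function on $\breve P$, meets the hypotheses of Theorem~\ref{thm2}, so that it extends to a complete compatible toric complex structure on all of $X$. The asymptotics $\xi=\partial\s/\partial x\sim\nu_k\log r$ near $E_k$ translate into the singular behaviour $\tfrac12\ell_k\log\ell_k$ of Guillemin's potential $\s_P$, so that $h=\s-\s_P$ extends smoothly across each edge; one then checks smoothness across the vertices and that $\Det(\Hess\s)=(\de\prod_i\ell_i)^{-1}$ for a smooth, strictly positive $\de$, using the relation between $r$ and $\prod_i\ell_i$ built into the construction. I expect this last matching step to be the main obstacle: showing that the purely local Joyce--Donaldson potential satisfies Guillemin's boundary conditions \emph{exactly}, in particular that $h$ extends smoothly over the vertices and that $\de$ is smooth and positive up to the boundary rather than merely on $\breve P$. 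The positivity computation of the second step is delicate but self-contained, whereas this final step is where the global geometry of the polygon genuinely enters; granting it, Theorem~\ref{thm2} promotes the local metric to a global scalar-flat toric K\"ahler metric on $X$.
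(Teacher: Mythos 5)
Your overall plan correctly mirrors the paper's architecture (Donaldson's Theorem~\ref{donaldson's} plus the boundary criterion of Theorem~\ref{thm2}, with the $a_i$'s fixed by matching the support numbers of $P$ — that part agrees with the paper), but two of your steps contain genuine gaps. First, the positivity step fails as stated. You are right that every pairwise Wronskian of the basis solutions has a definite sign on $r>0$, but the weights in your bilinear expansion do not: the coefficient vectors are the \emph{difference} vectors $\nu_{i+1}-\nu_i$, and determinants such as $\det(\nu_{i+1}-\nu_i,\nu_{j+1}-\nu_j)$ or $\det(\nu,\nu_{i+1}-\nu_i)$ are not sign-definite under convexity. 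For instance, for the moment polygon of $\mathcal{O}(-3)$, with $\nu_1=(0,1)$, $\nu_2=(1,0)$, $\nu_3=(3,-1)$, one has $\det(\nu_2-\nu_1,\nu_3-\nu_2)=\det\left((1,-1),(2,-1)\right)=1>0$ multiplying a strictly negative Wronskian, so your sum contains genuinely negative terms and ``all terms contribute with the same sign'' is false. The paper avoids this: for $\nu=0$ it quotes Joyce's Lemma 3.3.2 rather than arguing term by term, and for the $\nu$-dependent part it first upgrades the hypotheses~(\ref{eq:condnu}) to $\det(\nu,\nu_i)\geq 0$ for \emph{all} $i$ using convexity (a deduction absent from your sketch), then performs a summation by parts so that these nonnegative quantities become the weights — this is the regrouping in~(\ref{extraV}) — and finally proves the remaining monotonicity $\rho_{i-1}(H_{i-1}+\rho_{i-1})<\rho_i(H_i+\rho_i)$ via an auxiliary function $f$ with $f(0)=0$ and $f'(a)>0$. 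The Abel-summation regrouping together with this monotonicity lemma is the idea your step is missing.

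Second, you explicitly grant the boundary verification (``granting it, Theorem~\ref{thm2} promotes\dots''), but that is roughly half of the paper's proof and does not follow formally from the telescoped asymptotics $\xi\sim\nu_k\log r$: converting $\log r$ into $\tfrac12\log\ell_k$ requires exactly the determinant estimate you defer. The paper obtains it from Donaldson's identity $r=(\det\Hess_x(\s))^{-1/2}$, which reduces $\det\Hess_x(\s)=\left(\delta\prod_i\ell_i\right)^{-1}$ to the claim $\partial\ell_j/\partial r = r\gamma_j$ with $\gamma_j(H,0)$ strictly positive on the interval corresponding to $E_j$; this is then checked by a three-case sign analysis (again leaning on convexity of $P$). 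Vertices are handled by a separate expansion, $H+\sqrt{H^2+r^2}=2\,\nu_2\cdot x+O(r^2)$ and $r=\delta'\,\ell_1^{1/2}\ell_2^{1/2}$, yielding $d\s=\tfrac12\left((\alpha_1\log\ell_1+\alpha_2\log\ell_2)\,dx_1+(\beta_1\log\ell_1+\beta_2\log\ell_2)\,dx_2\right)+O(1)$, and a final observation — which you would also need — that the $r\to 0$ behaviour of $\xi$ is independent of $\nu$, so the $\nu\neq 0$ case inherits the boundary behaviour established for $\nu=0$. One smaller slip: the converse half of Theorem~\ref{thm2} produces a \emph{not necessarily complete} compatible toric complex structure, which is all this theorem needs; completeness is a separate statement (Proposition~\ref{prop:Jcomplete}, requiring strict unboundedness), so your claim that the boundary check itself yields a complete complex structure is unwarranted.
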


\begin{rem} \label{rem:cone}
Note that the set of vectors $\nu\in\R^2$ satisfying condition~(\ref{eq:condnu}), forms a cone with edge vectors 
$-\nu_1$ and $\nu_d$. This cone has non-empty interior as long as the moment polygon $P$ is strictly unbounded.
\end{rem}

\begin{rem} \label{rem:constant}
Note that the moment map and the coordinates $x=(x_1,x_2)$ are only defined up to constants.
What the theorem says is that these constants can be arranged so that $x_1$ and $x_2$ do 
define global symplectic action coordinates on $P$. We will assume in the proof that the ``first" vertex of $P$ is the origin in $\R^2$.
\end{rem}

\begin{proof}
In view of Theorems~\ref{thm2} and~\ref{donaldson's}, together with the fact that
\begin{displaymath}
 \log\left( H+a+\sqrt{(H+a)^2+r^2}\right)
\end{displaymath}
is a solution of equation~(\ref{PDE}) for any $a\in\R$, there are three missing 
ingredients:
\begin{itemize}
\item We need to show that, under the above assumptions, $\det D\xi>0$.
\item We need to show that $x=(x_1,x_2)$ define global symplectic action coordinates on $P$.
\item We need to show that the boundary behavior of $\s(x)$ on $\partial P$ is the one
required by Theorem~\ref{thm2}. 
\end{itemize}

We start by showing that $\det D\xi>0$. When $\nu=0$ this is a result of Joyce (\cite{j}, 
Lemma 3.3.2, see also~\cite{cs}). In this case, a direct calculation shows that
\begin{equation} \label{eq:Dxi0}
D\xi=\begin{pmatrix}
\sum_{i=1}^{d-1}\frac{(\alpha_{i+1}-\alpha_{i})}{2\rho_i}
&
\frac{\alpha_1}{r}+\sum_{i=1}^{d-1}\frac{(\alpha_{i+1}-\alpha_{i})r}{2\left( H_i+\rho_i\right)\rho_i} 
\\
\sum_{i=1}^{d-1}\frac{(\beta_{i+1}-\beta_{i})}{2\rho_i}
&
\frac{\beta_1}{r}+\sum_{i=1}^{d-1}\frac{(\beta_{i+1}-\beta_{i})r}{2\left( H_i+\rho_i \right)\rho_i} 
\end{pmatrix},
\end{equation}
where we have used the notation 
\[
H_i=H+a_i \quad\text{and}\quad \rho_i=\sqrt{H_i^2+r^2}\,. 
\]

When $\nu\ne0$ we start by noticing that, because of convexity of the moment
polygon, the condition
\[
\det(\nu,\nu_1), \,\,\det(\nu,\nu_d)\geq 0
\]
actually implies
\[
\det(\nu,\nu_i)\geq0, \quad\forall\, i=1,\cdots,d.
\]
In this case $D\xi$ is obtained by adding 
\[
\begin{pmatrix}
\alpha&0\\
\beta&0
\end{pmatrix}
\]
to~(\ref{eq:Dxi0}), which in turn changes $\det D\xi$ by adding the following quantity:
\begin{eqnarray}
\det(\nu,\nu_1)\left(\frac{1}{r}-\frac{r}{2\rho_1(H_1+\rho_1)}\right)+& \nonumber\\
\frac{r}{2}\sum_{i=1}^{d-1}{\det(\nu,\nu_i)}\left(\frac{1}{\rho_{i-1}(H_{i-1}+\rho_{i-1})}-\frac{1}{\rho_i(H_i+\rho_i)}\right)+& \nonumber \\
\frac{r}{2}\det(\nu,\nu_d)\frac{1}{\rho_d(H_d+\rho_d)}& . \label{extraV}
\end{eqnarray}
One can easily check that
\[
\frac{1}{r}-\frac{r}{2\rho_1(H_1+\rho_1)} > 0
\quad\text{and}\quad
\frac{1}{\rho_d(H_d+\rho_d)} > 0\,.
\]
We have that
\[
\frac{1}{\rho_{i-1}(H_{i-1}+\rho_{i-1})}-\frac{1}{\rho_i(H_i+\rho_i)} =
\frac{\rho_i (H_i + \rho_i) - \rho_{i-1} (H_{i-1} + \rho_{i-1})}
{\rho_i \rho_{i-1} (H_{i-1} + \rho_{i-1})(H_i + \rho_i)}\,.
\]
Since the denominator of the right hand side is clearly positive, we just
need to show that its numerator is positive. A simple calculation shows that
this numerator can be written as
\[
(H_{i-1} + a) (H_{i-1} + a + \sqrt{(H_{i-1} + a)^2 + r^2}) - 
 H_{i-1}(H_{i-1} + \sqrt{H_{i-1}^2 + r^2})\,,
\]
where $a = a_i - a_{i-1} > 0$. To show that this is positive, fix $H_{i-1},r\in\R$ and
consider the function $f:\R \to \R$ defined by
\[
f(a) = (H_{i-1} + a) (H_{i-1} + a + \sqrt{(H_{i-1} + a)^2 + r^2}) - 
 H_{i-1}(H_{i-1} + \sqrt{H_{i-1}^2 + r^2})\,.
\]
Then $f(0) = 0$ and
\[
f'(a) = \frac{(H_{i-1} + a + \sqrt{(H_{i-1} + a)^2 + r^2})^2}{\sqrt{(H_{i-1} + a)^2 + r^2}} 
> 0 \Rightarrow f(a) > 0 \,,\ \forall\, a>0\,.
\]
Hence, we conclude that all the terms in~(\ref{extraV}) are positive, which finishes
the proof that $\det D\xi > 0$.

We will now prove that $x=(x_1,x_2)$ define global symplectic action coordinates on $P$.
Some easy calculations show that if
\begin{displaymath}
 \xi=\frac{1}{2}\log\left( H+a+\sqrt{(H+a)^2+r^2}\right)
\end{displaymath}
then 
\begin{displaymath}
 \epsilon = r \left(\frac{\partial\xi}{\partial r} dH - 
 \frac{\partial\xi}{\partial H} dr \right) 
 =\frac{1}{2}d\left( H+a-\sqrt{(H+a)^2+r^2}\right).
\end{displaymath}
Hence, when $\nu=0$ we have that
\begin{displaymath}
 x_1=\beta_1 H+\frac{1}{2}\sum_{i=1}^{d-1}(\beta_{i+1}-\beta_i)\left( H+a_i-\sqrt{(H+a_i)^2+r^2}\right)
\end{displaymath}
and
\begin{displaymath}
 x_2=-\alpha_1 H-\frac{1}{2}\sum_{i=1}^{d-1}(\alpha_{i+1}-\alpha_i)\left( H+a_i-\sqrt{(H+a_i)^2+r^2}\right).
\end{displaymath}
Note that $x = (x_1,x_2)$ extends continuously to $r=0$. To show that 
these define global symplectic action coordinates on $P$ 
we need to determine the $a_i$'s so that $(x_1(H,0), x_2(H,0))\in\partial P$. 
When $r=0$ we have 
\begin{displaymath}
 x_1=\beta_1H +\frac{1}{2}\sum_{i=1}^{d-1}(\beta_{i+1}-\beta_i)(H+a_i-|H+a_i|)
\end{displaymath}
and
\begin{displaymath}
 x_2=-\alpha_1 H - \frac{1}{2}\sum_{i=1}^{d-1}(\alpha_{i+1}-\alpha_i)(H+a_i-|H+a_i|)\,,
\end{displaymath}
which means that:
\begin{itemize}
\item[(i)] if $-a_1 < H$ then
\[
x_1=\beta_{1}H \quad\text{and}\quad x_2=-\alpha_{1}H\,.
\]
\item[(ii)] if $-a_{j+1}<H<-a_j$ then
\[
x_1=\beta_{j+1}H+\sum_{i=1}^j a_i (\beta_{i+1}-\beta_{i})
\quad\text{and}\quad
x_2=-\alpha_{j+1}H-\sum_{i=1}^j a_i (\alpha_{i+1}-\alpha_{i})\,.
\]
\item[(iii)] if $H<-a_{d-1}$ then
\[
x_1=\beta_{d}H + \sum_{i=1}^{d-1} a_i (\beta_{i+1}-\beta_{i})
\quad\text{and}\quad
x_2=-\alpha_{d}H - \sum_{i=1}^{d-1} a_i (\alpha_{i+1}-\alpha_{i})\,.
\]
\end{itemize}
Hence, we have that:
\begin{itemize}
\item[(i)] if $-a_1 < H$ then
\[
x\ip \nu_1 = 0.
\]
\item[(ii)] if $-a_{j+1}<H<-a_j$ then
\[
x\ip \nu_{j+1} = - \sum_{i=1}^{j} a_i \det (\nu_{i+1} - \nu_i, \nu_j).
\]
\item[(iii)] if $H<-a_{d-1}$ then
\[
x\ip \nu_{d} = - \sum_{i=1}^{d-1} a_i \det (\nu_{i+1} - \nu_i, \nu_{d-1}).
\]
\end{itemize}
Note that each of the above expressions is a constant independent of $H$.

Let $P$ be given by
\[
P = \left\{ x\in\R^2:\ \ell_j (x) \equiv \langle x, \nu_j\rangle + \la_j \geq 0\,,
\ j=1,\ldots,d\right\}\,.
\]
We may assume that $\la_1 = \la_2 = 0$, which is equivalent to the ```first" vertex
of $P$ being the origin in $\R^2$. We then have that:
\begin{itemize}
\item[(i)] if $-a_1 < H$ then
\[
\ell_1 (x) = 0 \Leftrightarrow x\ip \nu_1 = 0.
\]
\item[(ii)] if $-a_{j+1}<H<-a_j$ then
\[
\ell_{j+1} (x) = 0 \Leftrightarrow \sum_{i=1}^{j} a_i \det (\nu_{i+1} - \nu_i, \nu_j) 
= \la_{j+1}\,.
\]
\item[(iii)] if $H<-a_{d-1}$ then
\[
\ell_{d} (x) = 0 \Leftrightarrow
\sum_{i=1}^{d-1} a_i \det (\nu_{i+1} - \nu_i, \nu_{d-1}) = \la_d\,.
\]
\end{itemize}
Using the fact that
\begin{displaymath}
\det(\nu_{j+1},\nu_j)=1\,,
\end{displaymath}
one easily checks that the linear system of equations in (ii) and (iii) above determine
the $a_j$'s uniquely. With this choice of $a_j$'s, it follows from the above expression 
for $x(H,0)$ that $x:\partial\bbH\to\partial P$ is a proper homeomorphism.

Hence, we may conclude that
\[
x: (\overline{\bbH}, \partial\bbH) \longrightarrow (P,\partial P)
\]
is a proper homeomorphism, whose restriction to $\bbH$ is a smooth proper diffeomorphism
onto $\breve{P}$ (see Figure~\ref{fig:HtoP}). 
This proves that $x=(x_1,x_2)$ can be used as symplectic action coordinates
on $\breve{P}$.

\begin{figure}
      \centering
      \includegraphics[scale=0.80]{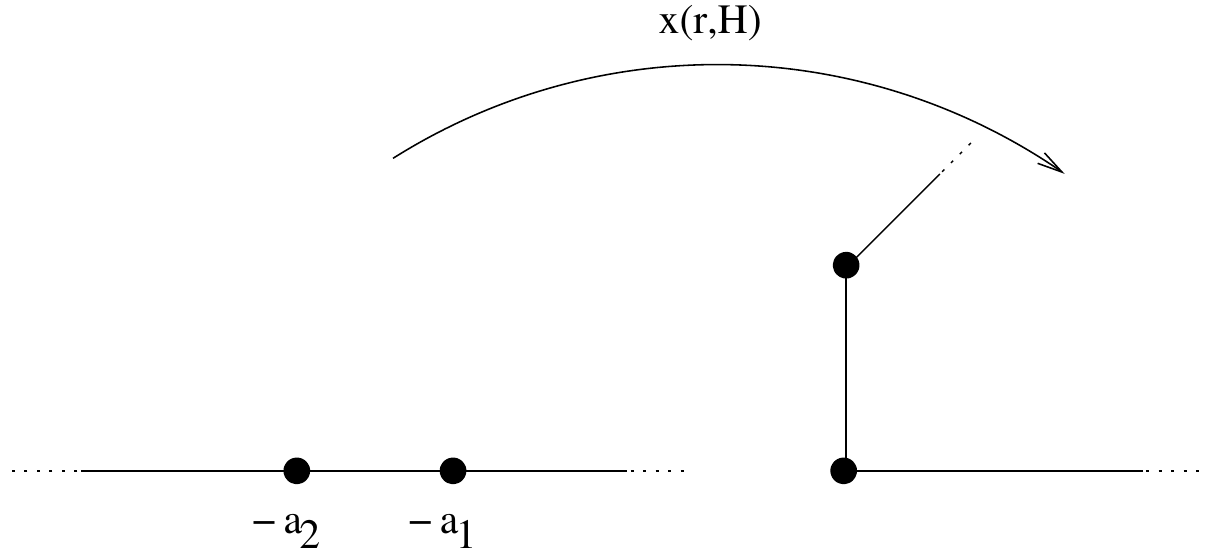}
      \caption{The map 
      $x: (\overline{\bbH}, \partial\bbH) \longrightarrow (P,\partial P)$.}
      \label{fig:HtoP}
\end{figure}

We will now study the boundary behavior of the symplectic potential
$\s:\breve{P}\to\R$, defined up to a constant by
\[
d\s = \xi_1 dx_1 + \xi_2 dx_2\,.
\]
We start by showing that
\begin{equation} \label{eq:det}
\det \left(\Hess_x (\s)\right) = \left(\delta \prod_{i=1}^d l_i \right)^{-1}\,,
\end{equation}
with $\delta$ being a smooth and strictly positive function on the whole P.
We know from~\cite{d1} that $r=(\det{\Hess_x (\s)})^{-1/2}$, so we need to show that
\begin{displaymath}
r =\left(\delta \prod_{i=1}^d \ell_i \right)^{1/2}\,,
\end{displaymath}
which is equivalent to
\[
\prod_{i=1}^d \ell_i = \frac{r^2}{\delta}\,.
\]
Hence, it suffices to prove that as we approach the edge $E_j$ of $P$ we have
\[
\frac{\partial\ell_j}{\partial r} \sim r \gamma_j\,,
\]
with $\gamma_j$ being a smooth and strictly positive function.
Since
\begin{displaymath}
\frac{\partial x_1}{\partial r} =
-\frac{r}{2}\sum_{i=1}^{d-1} \frac{\beta_{i+1}-\beta_i}{\rho_i}
\quad\text{and}\quad
\frac{\partial x_2}{\partial r} =
\frac{r}{2}\sum_{i=1}^{d-1} \frac{\alpha_{i+1}-\alpha_i}{\rho_i}\,,
\end{displaymath}
we have that
\begin{align}
\frac{\partial\ell_j}{\partial r} & = \frac{\partial x_1}{\partial r} \alpha_j +
\frac{\partial x_2}{\partial r} \beta_j \notag \\
& = - \frac{r}{2} \sum_{i=1}^{d-1} 
\frac{(\beta_{i+1}-\beta_i) \alpha_j - (\alpha_{i+1}-\alpha_i) \beta_j}{\rho_i} \notag \\
& = \frac{r}{2} \sum_{i=1}^{d-1} 
\frac{\det (\nu_{i+1} - \nu_{i}, \nu_j)}{\rho_i} \notag \\
& = \frac{r}{2} \left( -\frac{\det(\nu_1, \nu_j)}{\rho_1} + \sum_{i=2}^{d-1}
\det (\nu_i,\nu_j)\left(\frac{1}{\rho_{i-1}} - \frac{1}{\rho_i}\right) +
\frac{\det(\nu_d, \nu_j)}{\rho_{d-1}}\right)\,. \notag
\end{align}
Hence, we need to show that
\[
\gamma_j : = \frac{1}{2} \left( -\frac{\det(\nu_1, \nu_j)}{\rho_1} + \sum_{i=2}^{d-1}
\det (\nu_i,\nu_j)\left(\frac{1}{\rho_{i-1}} - \frac{1}{\rho_i}\right) +
\frac{\det(\nu_d, \nu_j)}{\rho_{d-1}}\right)
\]
is strictly positive when $r=0$ and $H$ varies in the interval corresponding to the
edge $E_j$. Since $\rho_i = |H + a_i|$ when $r=0$, we have that:
\begin{itemize}
\item[(i)] if $-a_1 < H$, i.e. $j=1$, then
\[
\gamma_j (H,0) = \sum_{i=2}^{d-1} \det (\nu_i,\nu_1)\left(\frac{1}{H + a_{i-1}} - 
\frac{1}{H + a_i}\right) + \frac{\det(\nu_d, \nu_1)}{H + a_{d-1}}\,.
\]
All the terms on the right hand side are strictly positive, since
\[
0 < H + a_{i-1} < H + a_i \,,\ \forall\, 1<i<d\,,
\quad\text{and}\quad \det(\nu_i, \nu_1) > 0\,,\ \forall\, 1<i\leq d\,.
\]
\item[(ii)] if $-a_{j}<H<-a_{j-1}$, i.e. $1<j<d$, then $\det(\nu_1, \nu_j) < 0$, 
$\det(\nu_d, \nu_j) > 0$ and
\[
\det (\nu_i, \nu_j) < 0\,,\  \frac{1}{\rho_{i-1}} - \frac{1}{\rho_i} < 0\,,\ 
\text{for every $1< i < j$,}
\]
while
\[
\det (\nu_i, \nu_j) > 0\,,\  \frac{1}{\rho_{i-1}} - \frac{1}{\rho_i} > 0\,,\ 
\text{for every $j < i < d-1$.}
\]
Hence, $\gamma_j (H,0) > 0$.
\item[(iii)] if $H < -a_d$, i.e. $j=d$, then
\[
\gamma_j (H,0) = \frac{\det(\nu_1, \nu_d)}{H + a_1} + \sum_{i=2}^{d-1} 
\det (\nu_i,\nu_d)\left(\frac{1}{H + a_{i}} - 
\frac{1}{H + a_{i-1}}\right)\,.
\]
Again, all the terms on the right hand side are strictly positive, since
\[
H + a_{i-1} < H + a_{i} < 0 \,,\ \forall\, 1<i<d\,,
\quad\text{and}\quad \det(\nu_i, \nu_d) < 0\,,\ \forall\, 1\leq<i< d\,.
\]
\end{itemize}
This finishes the proof of~(\ref{eq:det}).
 
Now, using Proposition~\ref{asymp_sol}, we write down the asymptotic expression 
for $d\s$ when $r$ tends to $0$. When $-a_{j}<H<-a_{j-1}$, $j=2,\ldots,d-1$, we have
\begin{displaymath}
 \xi_1=\alpha_1\log(r)+\sum_{i=1}^{j-1} (\alpha_{i+1}-\alpha_i)\log(r)+O(1)
\end{displaymath}
and 
\begin{displaymath}
 \xi_2=\beta_1\log(r)+\sum_{i=1}^{j-1} (\beta_{i+1}-\beta_i)\log(r)+O(1).
\end{displaymath}
Therefore
\begin{displaymath}
 \xi_1=\alpha_{j}\log(r)+O(1),
\end{displaymath}
and
\begin{displaymath}
 \xi_2=\beta_{j}\log(r)+O(1).
\end{displaymath}
One easily checks that these formulas also hold when $j=1$ and $j=d$. Hence,
this implies that
\begin{displaymath}
 d \s = \log(r) \left(\alpha_{j+1} dx_1 + \beta_{j+1} dx_2\right) + O(1)
\end{displaymath}
which, taking into account the fact that $r=\left(\delta\prod \ell_i\right)^{1/2}$, is the 
same as saying that close to the interior of the edge $E_{j+1}$ of $P$,
\begin{displaymath}
 d \s= \frac{1}{2} \log(\ell_{j+1}) \left(\alpha_{j+1} dx_1 + \beta_{j+1} dx_2\right) + O(1)
\end{displaymath}
close to the interior of the edge $E_{j+1}$ of $P$. This is the boundary behavior required 
by Theorem~\ref{thm2}.

It remains to check what happens near a vertex of $P$. We may consider, without any loss of generality, 
the vertex corresponding to $(-a_1,0)\in\overline{\bbH}$ and assume that $a_1=0$. 
Hence we have $0< a_2<\cdots< a_{d-1}$. As $r$ and $H$ tend to zero, we have
\begin{displaymath}
H+a_i+\sqrt{(H+a_i)^2+r^2}\rightarrow2a_i
\end{displaymath}
and
\begin{displaymath}
H+a_i-\sqrt{(H+a_i)^2+r^2}\simeq -\frac{r^2}{2a_i}\,,
\end{displaymath}
for $i=2,\cdots,d-1$
As a consequence
\[
x_1=\beta_1 H+\frac{1}{2}(\beta_{2}-\beta_1)\left( H-\sqrt{H^2+r^2}\right)+O(r^2)
\]
and
\[
x_2=-\alpha_1 H-\frac{1}{2}(\alpha_{2}-\alpha_1)\left( H-\sqrt{H^2+r^2}\right)+O(r^2).
\]
We see that
\begin{displaymath}
(\nu_2-\nu_1)\ip { x}=H+O(r^2),
\end{displaymath}
since 
\begin{displaymath}
(\alpha_2-\alpha_1)\beta_1-(\beta_2-\beta_1)\alpha_1=\det(\nu_2-\nu_1,\nu_1)=1.
\end{displaymath}
Similarly, we can see that
\begin{displaymath}
\nu_1\ip { x}=-\frac{1}{2}\left( H-\sqrt{H^2+r^2}\right)+O(r^2).
\end{displaymath}
Putting these together we conclude that
\begin{displaymath}
 H+\sqrt{H^2+r^2}=2\nu_2  \ip { x}+O(r^2).
\end{displaymath}
Note that, as we have seen before, $r=\left(\delta \prod \ell_i\right)^{1/2}$.
Near the vertex this becomes $r=\delta' \ell_1^{1/2}\ell_2^{1/2}$. We also have
\[
\xi_1=\alpha_1 \log(r)+\frac{1}{2}(\alpha_{2}-\alpha_1)\log\left( H+\sqrt{H^2+r^2}\right)+O(1)
\]
and
\[
\xi_2=\beta_1\log(r) +\frac{1}{2}(\beta_{2}-\beta_1)\log\left( H+\sqrt{H^2+r^2}\right)+O(1)\,.
\]
Substituting the above expression for $r$, we obtain
\[
\xi_1=\frac{1}{2}\alpha_1\log(\ell_1)+\frac{1}{2}\alpha_2\log(\ell_2)+O(1)
\]
and
\[
\xi_2=\frac{1}{2}\beta_1\log(\ell_1)+\frac{1}{2}\beta_2\log(\ell_2)+O(1)\,.
\]
This implies that
\begin{displaymath}
 d \s=\frac{1}{2}\left( (\log(\ell_1)\alpha_1 + \log(\ell_2)\alpha_2) dx_1 +
 (\log(\ell_1)\beta_1 + \log(\ell_2)\beta_2) dx_2\right)+O(1)\,,
\end{displaymath}
which again is the boundary behavior required by Theorem~\ref{thm2}.

Finally, we note that the boundary ($r=0$) behavior of $\xi = (\xi_1, \xi_2)$
is independent of $\nu$. Hence, the fact that $x=(x_1,x_2)$ define global symplectic 
action coordinates on $P$ and $\s$ has the required boundary behavior when $\nu=0$, 
remains true when $\nu\ne 0$.

\end{proof}

\section{Asymptotic Behavior}\label{sec:complete}

The goal of this section is to study the asymptotic behavior of the metrics and complex
structures produced by Theorem~\ref{precise_thm}.

\begin{prop}\label{complete}
Let $X$ be a strictly unbounded smooth symplectic toric $4$-manifold and $P$ its moment 
polygon. Let $d$ be the number of edges of $P$. Let $\nu_i=(\alpha_i,\beta_i), i=1,\cdots, d$ 
be the interior primitive normals to the edges of $P$ and let $\nu=(\alpha,\beta)\in\R^2$ such that
\begin{equation} \label{open_cone}
\det(\nu,\nu_1),\det(\nu,\nu_d)>0\,,
\end{equation}
when $\nu\ne 0$.
Then the metric defined in Theorem\ref{precise_thm} is ALE when $\nu = 0$ and complete and 
asymptotic to a generalized Taub-NUT metric when $\nu\ne 0$. 
\end{prop}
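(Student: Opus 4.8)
The plan is to analyze the metric at infinity by pulling back along the proper homeomorphism $x:(\overline{\bbH},\partial\bbH)\to(P,\partial P)$ of Theorem~\ref{precise_thm}: since $X$ agrees with $\bbC^2/\Gamma$ outside a compact set (Proposition~\ref{prop:complex}), going to infinity in $X$ amounts to letting $H^2+r^2\to\infty$ in $\overline{\bbH}$. First I would isolate the two-edge \emph{model data}: the cone with interior normals $\nu_1$ and $\nu_d$ together with the same vector $\nu$. Feeding this data into the construction produces, via Theorem~\ref{donaldson's}, a reference symplectic potential $\s_\infty$ and metric $g_\infty$ on $\bbC^2/\Gamma$ (its pullback to the smooth cover $\bbC^2=\R^4$). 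When $\nu=0$ the potential $\s_\infty$ coincides, up to the coordinate change, with Guillemin's potential of Theorem~\ref{thm1} for the cone, so $g_\infty$ is the flat orbifold metric on $\R^4/\Gamma$; when $\nu\ne0$, $g_\infty$ is exactly one of Donaldson's generalized Taub-NUT metrics, which is complete by~\cite{d2}. This $g_\infty$ is what ``asymptotic to a generalized Taub-NUT metric'' will mean.

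The core step is to show $g\to g_\infty$ as $H^2+r^2\to\infty$, with the two metrics uniformly equivalent there. Since both are determined by their $\xi$-data through~(\ref{eq:hol_coords}) and~(\ref{metricG}), it suffices to control the differences $\xi_k-\xi_{k,\infty}$, i.e.\ the telescoping remainders
\[
\frac{1}{2}\sum_{i=1}^{d-1}(\alpha_{i+1}-\alpha_i)\Bigl[\log\bigl(H_i+\rho_i\bigr)-\log\bigl(H+\sqrt{H^2+r^2}\bigr)\Bigr]
\]
(and the analogue for $\beta$), together with the induced change in the action coordinates $x$. I would expand these using $\rho_i=\sqrt{H^2+r^2}+O\bigl(1/\sqrt{H^2+r^2}\bigr)$, splitting $\overline{\bbH}$ into three asymptotic regions: the two edge-directions $H\to\pm\infty$, $r\to0$ (where the relevant logarithm behaves like $\log(2H)$, respectively like $2\log r-\log(2|H|)$, as in Proposition~\ref{asymp_sol}), and the bulk $r\to\infty$. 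In each region the remainders, and their first and second $x$-derivatives, decay, so $h:=\s-\s_\infty$ has $\Hess_x(h)\to0$; hence $\Hess_x(\s)$ and its inverse converge to their $g_\infty$-counterparts and $g$, $g_\infty$ become uniformly comparable at infinity.

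With this convergence the two conclusions follow quickly. For $\nu=0$, $g$ is asymptotic to the flat orbifold metric on $\R^4/\Gamma$, and reading off the decay rate of $\Hess_x(h)$ gives the ALE estimate. For $\nu\ne0$, completeness of $g$ is inherited from the model: since $c^{-1}g_\infty\le g\le c\,g_\infty$ outside a compact set and $g_\infty$ is complete, any path escaping to infinity has infinite $g$-length, while on the compact exceptional region $g$ is a smooth metric; by Hopf--Rinow $g$ is complete, and by the previous paragraph it is asymptotic to the generalized Taub-NUT metric $g_\infty$.

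I expect the main obstacle to be the uniform decay of $\Hess_x(h)$ in the two edge-directions, where $r\to0$ and the collapse of a circle in $\T^2$ interacts with the passage to infinity. There the expansion of $\log(H_i+\rho_i)$ must be carried out in the degenerate regime $H+\sqrt{H^2+r^2}\to0$, and one must verify that the remainders stay controlled uniformly up to $\partial\bbH$ after the singular coordinate change to $x$; this boundary regularity, rather than the bulk estimate, is the delicate point. A secondary subtlety is checking that the single $\T^2$-direction of bounded length singled out by $\nu$ (cf.\ Remark~\ref{rem:circle}) is genuinely the Taub-NUT circle of $g_\infty$, so that the collapse of $g$ matches that of the model.
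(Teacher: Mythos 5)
Your plan is architecturally different from the paper's proof, and its core step has a genuine gap. For contrast: the paper never attempts a uniform equivalence $c^{-1}g_\infty\le g\le c\,g_\infty$ with a model metric. It uses the splitting $g=V(dH^2+dr^2)+a\,d\theta_1^2+b\,d\theta_2^2+c\,d\theta_1d\theta_2$ with $V=r\det D\xi$, discards the positive semidefinite angular block to get $l(\gamma)\ge\int\sqrt{V(\dot H^2+\dot r^2)}$, and then computes only the scalar asymptotics $V=V_{Euclidean}+O(\rho^{-2})$ (for $\nu=0$) resp. $V=V_{Taub-NUT}+O(\rho^{-2})$ (for $\nu\ne0$) in the half $H\ge0$, treating $H\le0$ by the reflection $(H,r)\mapsto(-H,r)$, $(\nu_1,\ldots,\nu_d)\mapsto(\nu_d,\ldots,\nu_1)$, $\nu\mapsto-\nu$, under which $\det(\nu,\nu_d)>0$ takes over from $\det(\nu,\nu_1)>0$. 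Crucially, the phrase ``asymptotic to a generalized Taub-NUT metric'' is \emph{defined} in the proof to mean exactly this asymptotic expansion of $V$; the ALE claim for $\nu=0$ is not re-proved but cited to \cite{j,cs}, and the finer asymptotic analysis you are aiming for is explicitly deferred to \cite{AS} (cf.\ Remark~\ref{rem:circle}). This is why the paper needs only $\det D\xi$, never the full matrix $\Hess^{-1}({\bf u})$.

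The gap in your proposal is the inference ``$\Hess_x(h)\to0$, hence $\Hess_x({\bf u})$ and its inverse converge to their $g_\infty$-counterparts and the metrics become uniformly comparable.'' Additive smallness of a Hessian difference does not yield multiplicative comparability when the comparison Hessian degenerates, and here it does degenerate in both regimes you need: in the ALE case the eigenvalues of $\Hess_x({\bf u}_\infty)$ decay like $1/\rho$ at infinity, and in the Taub-NUT case the computation underlying Proposition~\ref{prop:circle} shows $\Hess^{-1}({\bf u})$ grows like $r^2$ transverse to the $\nu$-direction, i.e.\ $\Hess_x({\bf u}_\infty)$ has an eigenvalue decaying like $r^{-2}$; near the unbounded edges ($r\to0$, $H\to\pm\infty$) the eigenvalues degenerate further and the change of variables from $(H,r)$ to $x$ is itself singular. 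So what you actually need is a \emph{relative} estimate — $\Hess_x(h)$ small compared with the decaying eigenvalues of $\Hess_x({\bf u}_\infty)$, uniformly up to $\partial\bbH$ — which is precisely the point you flag as ``delicate'' but do not supply; without it neither the two-sided bound nor the Hopf--Rinow completeness conclusion follows, and your ``ALE estimate'' for $\nu=0$ is likewise unproved rather than read off. A repair within your framework would require carrying out those boundary-uniform relative expansions region by region (essentially the content of \cite{AS}); alternatively, retreat to the paper's weaker but sufficient route: lower-bound the conformal factor $V$ alone, which the positivity of the angular block makes adequate for completeness, take the $V$-expansion as the meaning of ``asymptotic,'' and quote \cite{j,cs} for ALE.
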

\begin{rem}
Note that in the case of $\bbR^4$, which corresponds to $d=2$, $\nu_1=(0,1)$ and $\nu_2=(1,0)$, condition~(\ref{open_cone}) is equivalent to $\alpha > 0$ and $\beta < 0$. This condition coincides with the condition imposed by Donaldson in~\cite{d2} for his generalized Taub-NUT metrics. In fact, 
in this case, multiplying $\nu$ by a constant only changes the metric by an isometry, so that 
our construction only yields a one parameter family of metrics.
\end{rem}
\begin{proof}
The fact that, when $\nu=0$, the metrics given by Theorem\ref{precise_thm} are ALE follows 
from~\cite{j,cs}. Nevertheless we will check their completeness, since this will be useful for 
the $\nu\ne 0$ case.

What we need to show is that if a curve in $X$ ``tends to infinity" then its length also tends to infinity. We will use coordinates 
$(H,r,\theta_1,\theta_2)$ in $X$. These are defined on an open dense subset of $X$ and we may assume that our curve lies 
in that set. It follows from~\cite{d1} that the metrics constructed in Theorem~\ref{precise_thm} split as
\begin{displaymath}
g=V(dr^2+dh^2)+ad\theta_1^2+bd\theta_2^2+cd\theta_1d\theta_2,
\end{displaymath}
where
\[
V=r (\det D\xi)
\]
and $a,b$ and $c$ are functions of $H$ and $r$. If follows that 
\begin{displaymath}
l(\gamma)\geq \int\sqrt{ V((\dot{H})^2+(\dot{r})^2)}.
\end{displaymath}
for any curve $\gamma(t)=(H(t),r(t),\theta_1(t),\theta_2(t))$,
$t\in [0,T]$, in $X$.

We will first analyze the case $\nu=0$ and $H(t)\geq 0$ for large $t$. As we have seen, 
$D\xi$ is then given by
\[
\begin{pmatrix}
\sum_{i=1}^{d-1}\frac{(\alpha_{i+1}-\alpha_{i})}{2\rho_i} 
&
\frac{\alpha_1}{r}+\sum_{i=1}^{d-1}\frac{(\alpha_{i+1}-\alpha_{i})r}{2\left( H_i+\rho_i\right)\rho_i} 
\\
\sum_{i=1}^{d-1}\frac{(\beta_{i+1}-\beta_{i})}{2\rho_i} 
&
\frac{\beta_1}{r}+\sum_{i=1}^{d-1}\frac{(\beta_{i+1}-\beta_{i})r}{2\left( H_i+\rho_i \right)\rho_i} 
\end{pmatrix},
\]    
where we again us the notation $H_i=H+a_i$ and $\rho_i=\sqrt{H_i^2+r^2}$, as in the proof of Theorem \ref{precise_thm}. Since $\gamma(t)$ tends to infinity as $t\to\infty$, we have that
\[
\rho:=\sqrt{H^2+r^2}\to\infty \quad\text{as $t\to\infty$.}
\]
For $H\geq 0$ and $t\to\infty$ we have that 
\begin{displaymath}
\frac{1}{ H_i+\rho_i}-\frac{1}{H+\rho}=O\left(\frac{1}{\rho^2}\right)
\quad
\text{and}
\quad
\frac{1}{\rho_i}-\frac{1}{\rho}=O\left(\frac{1}{\rho^2}\right).
\end{displaymath}
As $t\to\infty$ the matrix $D\xi$ becomes
\[
\begin{pmatrix}
\frac{(\alpha_d-\alpha_1)}{2\rho} 
&
\frac{\alpha_1}{r}+\frac{(\alpha_d-\alpha_1)r}{2\rho(H+\rho)}
\\
\frac{(\beta_d-\beta_1)}{2\rho}
&
\frac{\beta_1}{r}+\frac{(\beta_d-\beta_1)r}{2\rho(H+\rho)}
\end{pmatrix}
+ O\left(\frac{1}{\rho^2}\right)
\]   
Hence
\begin{align}
V & = 
r\frac{(\alpha_d-\alpha_1)}{2\rho}\left(\frac{\beta_1}{r}+\frac{(\beta_d-\beta_1)r}{2\rho(H+\rho)}\right) 
-
r\frac{(\beta_d-\beta_1)}{2\rho} \left(\frac{\alpha_1}{r}+\frac{(\alpha_d-\alpha_1)r}{2\rho(H+\rho)}\right)
+ O\left(\frac{1}{\rho^2}\right) \notag \\
& 
= 
\frac{\beta_1(\alpha_d-\alpha_1)  - \alpha_1(\beta_d-\beta_1)}{2\rho}+
O\left(\frac{1}{\rho^2}\right) \notag \\
& = \frac{\det(\nu_d,\nu_1)}{2\rho}+O\left(\frac{1}{\rho^2}\right) = V_{Euclidean} 
+O\left(\frac{1}{\rho^2}\right)\,. \label{simpleV}
\end{align}
Because $P$ is strictly unbounded, we have $\det(\nu_d,\nu_1)> 0$ which implies that,
as $t\to\infty$, 
\begin{displaymath}
l(\gamma)\geq C\int \sqrt{\frac{(\dot{H})^2+(\dot{r})^2}{\rho}},
\end{displaymath}
for some positive constant $C$. It follows that $l(\gamma) ´\to \infty$ as $T\to\infty$.

Now we analyze the case $\nu\ne 0$ and $H\geq0$. The matrix $D\xi$ changes by the addition 
of 
\[
\begin{pmatrix}
\alpha&0\\
\beta&0
\end{pmatrix}.
\]
This in turn changes $V$ by adding to~(\ref{simpleV})
\[
\alpha r\left(\frac{\beta_1}{r}+\frac{(\beta_d-\beta_1)r}{2\rho(H+\rho)}\right)
- 
\beta r\left(\frac{\alpha_1}{r}+\frac{(\alpha_d-\alpha_1)r}{2\rho(H+\rho)}\right)
+ 
O\left(\frac{1}{\rho^2}\right)
\]
Simplifying, we see that $V$ is now given by the expression
\begin{align}
V & =\det(\nu,\nu_1)\left(1-\frac{r^2}{2\rho(H+\rho)}\right)+\det(\nu,\nu_d)\frac{r^2}{2\rho(H+\rho)}+ \frac{\det(\nu_d,\nu_1)}{2\rho} + O\left(\frac{1}{\rho^2}\right) \notag \\
& = V_{Taub-NUT} +O\left(\frac{1}{\rho^2}\right)\,. \notag
\end{align}
This is what we mean by asymptotic to Taub-NUT. Since $H\geq 0$ we have that
\begin{displaymath}
0\leq\frac{r^2}{2\rho(H+\rho)}\leq \frac{1}{2}.
\end{displaymath}
Therefore, the fact that $\det(\nu,\nu_1)>0$ implies that $V$ is 
bounded away from zero, hence 
\begin{displaymath}
l(\gamma)\geq C\int \sqrt{(\dot{H})^2+(\dot{r})^2},
\end{displaymath}
for some positive constant $C$. It again follows that $l(\gamma) ´\to \infty$ as 
$T\to\infty$.

We will now discuss curves in the region where $H\leq 0$. 
Under the symmetry $(H,r)\mapsto (-H,r)$, 
$(\nu_1,\ldots,\nu_d)\mapsto (\nu_d, \ldots,\nu_1)$,
$(a_1,\ldots,a_d)\mapsto (-a_d, \ldots,-a_1)$ and
$\nu \mapsto -\nu$, a curve in the region $H\leq 0$ goes to
a curve of the exact same length in the region $H\geq 0$, where now
the condition $\det(\nu,\nu_d) > 0$ implies completeness.

This finishes the proof.

\end{proof}

\begin{rem} \label{rem:strict1}
Although the proof above does not work in the case where the first and last edge are parallel, 
there is at least one example of an unbounded moment polygon with unbounded parallel edges that 
carries a complete scalar-flat toric K\"ahler metric. This metric is neither ALE nor ALF. More precisely, consider the non-compact moment polygon whose normals are $(0,1)$, $(1,0)$ and $(0,-1)$. 
The symplectic potential for the $\nu=0$ scalar-flat toric K\"ahler metric given by Theorem~\ref{precise_thm} is
\begin{displaymath}
\s = \frac{1}{2} \left(x\log x+y\log y +(2a-y)\log (2a-y)-\log(x+2a)\right).
\end{displaymath}
Consider a curve with $x=t$ and $y$ fixed. The length of such a curve is
\[
l=\int \s_{xx}=\int\sqrt{\frac{1}{t}-\frac{1}{t+2a}}dt
\]
which is unbounded. It is not hard to show that this implies completeness of this metric.

This is not surprising, since the toric manifold corresponding to this moment polygon is
$S^2 \times \R^2$ and the metric given by this symplectic potential is the
round $\times$ hyperbolic metric.
\end{rem}

We will now determine which of the toric compatible complex structures given by 
Theorem~\ref{precise_thm} are complete in the sense of Definition~\ref{def:Jcomplete}.

\begin{prop} \label{prop:Jcomplete}
Let $X$ be an unbounded symplectic toric $4$-manifold and $P$ its moment polygon. The toric
compatible complex structure $J$ defined by a symplectic potential $\s$ given by Theorem~\ref{precise_thm}
is complete iff $P$ is strictly unbounded.
\end{prop}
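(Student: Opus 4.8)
The plan is to recast completeness of $J$ as a statement about the image of the gradient map $\xi=\partial\s/\partial x\colon\breve{P}\to\R^2$. By~(\ref{eq:hol_coords}) the open dense orbit $\breve{X}_P\cong\breve{P}\times\T^2$ is biholomorphic, via $(w_1,w_2)=(e^{\xi_1+i\theta_1},e^{\xi_2+i\theta_2})$, to the Reinhardt domain $\{w\in(\C^*)^2:(\log|w_1|,\log|w_2|)\in\Xi\}$, where $\Xi:=\xi(\breve{P})$. Since $\Hess(\s)>0$, the function $\s$ is strictly convex on the convex set $\breve{P}$ and $\xi$ is a diffeomorphism onto the open \emph{convex} set $\Xi$. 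A direct computation from the matrix~(\ref{metricG}) shows that in the coordinates $(\xi,\theta)$ one has $JY_m=-\partial/\partial\xi_m$, so $JY_m$ generates the flow $w_m\mapsto e^{-t}w_m$. I will therefore prove that $\Xi=\R^2$ if and only if $P$ is strictly unbounded, and separately that completeness of $JY_1,JY_2$ is equivalent to $\Xi=\R^2$.

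Granting that $\Xi=\R^2$ characterises strict unboundedness, the equivalence with completeness is soft. If $\Xi=\R^2$ then $\breve{X}_P\cong(\C^*)^2$ and $JY_m$ restricts there to the standard scaling $w_m\mapsto e^{-t}w_m$, which is complete; since $X\setminus\breve{X}_P$ is a union of $JY_m$–invariant orbits on which $JY_m$ is likewise a complete scaling (or vanishes), $JY_1,JY_2$ are complete on all of $X$. Conversely, if $\Xi\subsetneq\R^2$ then, being open and convex, $\Xi$ has a supporting line with nonzero normal $n$, and there is an index $m$ with $\langle e_m,n\rangle\neq0$; the curve $t\mapsto(\xi_0-te_m,\theta_0)$ then stays in $\breve{X}_P$ and reaches $\partial\Xi$ at a \emph{finite} value of $t$ and of $\xi_m$. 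Its limit is neither an interior point of $\breve{X}_P$ nor a point of a boundary orbit of $X$ (for the latter $\xi$ would run off to infinity, by the boundary analysis in the proof of Theorem~\ref{precise_thm}), so the curve leaves every compact subset of $X$ in finite time and $JY_m$ is incomplete. This is exactly what happens for $S^2\times\R^2$, where $\Xi$ is a half–plane and $(\breve{X}_P,J)\cong\C^*\times D^*$ (compare Remark~\ref{rem:cxcomplete2}).

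To compute $\Xi$ I use that, being convex, $\Xi$ equals $\R^2$ exactly when its recession cone is $\R^2$, i.e. when the directions in which $\Xi$ is unbounded lie in no closed half–plane. The proof of Theorem~\ref{precise_thm} shows that as $r\to0$ along the $H$–interval of the edge $E_j$ one has $\xi=\nu_j\log r+O(1)$, so $\Xi$ is unbounded in each direction $-\nu_1,\dots,-\nu_d$; this is independent of $\nu$. Letting $r\to\infty$ with $H$ bounded, the telescoping identity $\sum_{i=1}^{d-1}(\nu_{i+1}-\nu_i)=\nu_d-\nu_1$ gives $\xi=\tfrac12(\nu_1+\nu_d)\log r+O(1)$, so $\Xi$ is also unbounded in the direction $\nu_1+\nu_d$ when this is nonzero. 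If $P$ is strictly unbounded then $\nu_1,\nu_d$ are linearly independent, and for any nonzero linear functional $\ell$ the inequalities $\ell(-\nu_1)\ge0$, $\ell(-\nu_d)\ge0$, $\ell(\nu_1+\nu_d)\ge0$ force $\ell(\nu_1)=\ell(\nu_d)=0$, hence $\ell=0$; so these three directions lie in no closed half–plane, the recession cone of $\Xi$ is $\R^2$, and $\Xi=\R^2$. If instead $P$ is unbounded but not strictly so, then $\nu_d=-\nu_1$, so $\nu_1+\nu_d=0$ and the inner normals turn monotonically through a half–turn from $\nu_1$ to $-\nu_1$; hence all the directions $-\nu_j$ lie in one closed half–plane bounded by the line $\R\nu_1$. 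Condition~(\ref{eq:condnu}) then forces $\nu\parallel\nu_1$, so the remaining regimes ($r\to\infty$ or $|H|\to\infty$) only contribute the directions $\pm\nu_1$ on the boundary of that half–plane. Thus the recession cone of $\Xi$ lies in a half–plane and $\Xi\neq\R^2$.

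The main obstacle is this last identification of $\Xi$: the $r\to0$ recession directions are handed to us by the proof of Theorem~\ref{precise_thm}, but one must verify honestly that in the non-strict case no other asymptotic regime (large $|H|$, or the effect of $\nu\neq0$) yields a recession direction outside the half–plane bounded by $\R\nu_1$, while in the strict case the single extra direction $\nu_1+\nu_d$ already suffices to fill out $\R^2$. Once the recession cone of $\Xi$ is pinned down, the passage to completeness of $JY_1,JY_2$ is the elementary convex–geometry and flow argument above.
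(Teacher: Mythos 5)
Your proposal follows essentially the same route as the paper's own proof: using the holomorphic coordinates~(\ref{eq:hol_coords}) to get $JY_m=-\partial/\partial\xi_m$, reducing completeness of $J$ to surjectivity of $\xi=\partial\s/\partial x$ (equivalently $\Xi=\R^2$), and then reading off the asymptotics $\xi=\nu_j\log r+O(1)$ as $r\to0$ and $\xi=\tfrac{1}{2}(\nu_1+\nu_d)\log r+O(1)$ as $r\to\infty$ to conclude that surjectivity holds iff $\nu_1+\nu_d\neq0$, i.e.\ iff $P$ is strictly unbounded. Your added convexity/recession-cone machinery (which, to be valid, really rests on the $\log$ blow-up of $\nabla\s$ at $\partial P$ making $\s$ of Legendre type, not on strict convexity alone) and your explicit treatment of the incompleteness direction and of the boundary orbits are rigorous fillings-in of exactly the steps the paper compresses into ``these asymptotic expressions easily imply''.
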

\begin{proof}
It follows from~(\ref{eq:hol_coords}) that
\[
(\xi_1, \xi_2, \theta_1, \theta_2)\,,\ \text{where}\ \xi_i = \frac{\partial \s}{\partial x_i}\,,
\]
are holomorphic coordinates for $J$. Hence,
\[
J \frac{\partial}{\partial \theta_i} = - \frac{\partial}{\partial\xi_i}\,,\ i=1,2\,,
\] 
which means that $J$ is complete iff the map
\begin{align}
\xi : \breve{P} \cong \bbH & \to \R^2 \notag \\
(H,r) & \mapsto (\xi_1 (H,r), \xi_2 (H,r)) \notag
\end{align}
is surjective.

We showed in the proof of Theorem~\ref{precise_thm} that, when $-a_j < H < - a_{j-1}\,,\ j=1,\ldots,d$,
with $a_0=+\infty$ and $a_d = -\infty$, we have
\[
\xi = \log (r) \nu_j + O(1) \quad\text{as}\quad r\to 0\,.
\]
Similarly, one can show that, for a fixed arbitrary $H\in\R$, we have
\[
\xi = \frac{1}{2} \log (r) (\nu_1 + \nu_d) + O(1) \quad\text{as}\quad r\to \infty\,.
\]
These asymptotic expressions for $\xi$ easily imply that
\[
\text{$\xi$ is surjective iff $\nu_1 + \nu_d \ne 0$,}
\]
which finishes the proof.
\end{proof}

\begin{rem} \label{rem:strict2}
In the example of Remark~\ref{rem:strict1}, the toric compatible complex structure determined
by $\s$ cannot be complete. In fact, as mentioned in section~\ref{sec:unbounded}, 
$S^2\times \bbR^2$ equipped with this complex structure is biholomorphic to $\cp^1\times D$, 
where $D\subset\C$ is the disc.
\end{rem}

To finish this section we discuss one more asymptotic feature of our generalized Taub-NUT metrics. 
Namely, we prove the following proposition (cf. Remark~\ref{rem:circle}).

\begin{prop} \label{prop:circle}
Let $X$ be a strictly unbounded smooth symplectic toric $4$-manifold and $P$ its moment 
polygon. Let $d$ be the number of edges of $P$. Let $\nu_i=(\alpha_i,\beta_i), i=1,\cdots, d$ be the 
interior primitive normals to the edges of $P$ and let $0\ne\nu=(\alpha,\beta)\in\R^2$ such that
\begin{equation} \label{condition}
\det(\nu,\nu_1),\det(\nu,\nu_d)>0.
\end{equation}
Then $(\alpha,\beta)\in\R^2$ generates the unique $1$-dimensional subspace of the Lie algebra
of $\T^2$ whose vectors induce vector fields on $X$ with bounded length.
\end{prop}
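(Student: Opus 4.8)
The plan is to compute the length of the vector field induced by a general element $w = (w_1, w_2)$ of the Lie algebra of $\T^2$, and determine for which $w$ this length stays bounded as we approach infinity. Since the torus action in action-angle coordinates is by translation in $\theta$, the vector field induced by $w$ is $w_1 \partial/\partial\theta_1 + w_2\partial/\partial\theta_2$, whose squared length with respect to the metric~(\ref{metricG}) is $\langle w, \Hess^{-1}(\s)\, w\rangle$. Equivalently, using the holomorphic coordinates from~(\ref{eq:hol_coords}) and the splitting $g = V(dH^2 + dr^2) + a\,d\theta_1^2 + b\,d\theta_2^2 + c\,d\theta_1 d\theta_2$ from the proof of Proposition~\ref{complete}, I would express the squared length as a quadratic form in $w$ whose coefficient matrix is $(D\xi)^{-1}$ up to the factor controlling the $\theta$-block. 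The key point is that $\Hess(\s) = U$ has $\Det(U) = 1/r^2$ (from $r = (\det\Hess_x(\s))^{-1/2}$), so the $\theta$-block $U^{-1} = \Hess^{-1}(\s)$ can be analyzed directly from the asymptotics of $D\xi$ already computed.

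First I would recall from the proof of Proposition~\ref{complete} the asymptotic form of $V = r\det D\xi$ and of the individual entries of $D\xi$ as $\rho = \sqrt{H^2 + r^2} \to \infty$. The relevant computation shows that, in the $\nu \ne 0$ Taub-NUT regime with $H \geq 0$, the function $V$ is bounded away from zero (this is exactly how completeness was established), and one reads off the asymptotic behavior of the $\theta$-block metric coefficients $a, b, c$ as functions of $H$ and $r$ in the limit $\rho \to \infty$. I would then form the squared length $|w|_g^2$ of the vector field $w_1\partial/\partial\theta_1 + w_2\partial/\partial\theta_2$, which is a quadratic form $a w_1^2 + b w_2^2 + c w_1 w_2$ (up to the normalization matching~(\ref{metricG})), and examine which direction $w = (w_1, w_2)$ keeps this bounded.

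The heart of the argument is that the leading behavior of $\Hess^{-1}(\s)$ as $\rho \to \infty$ is governed by the vector $\nu = (\alpha, \beta)$ itself. The natural expectation is that, for a generic element $w$ of the Lie algebra, the induced vector field grows without bound (reflecting the Euclidean-like growth in the transverse directions), whereas there is exactly one distinguished direction along which the metric coefficients decay, keeping the length bounded. I anticipate that the distinguished direction will turn out to be spanned by $(\alpha,\beta) = \nu$: the term in $V$ proportional to $\det(\nu, \nu_1)$ and $\det(\nu,\nu_d)$, which forced $V$ away from zero, precisely corresponds to the Taub-NUT circle direction shrinking at infinity. So I would carry out the asymptotic diagonalization of the $\theta$-block, identify the eigendirection whose corresponding eigenvalue stays bounded (equivalently, whose vector field length does not blow up), and show it is spanned by $\nu$.

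The main obstacle will be handling the two regimes $H \geq 0$ and $H \leq 0$ uniformly and controlling the cross term $c\,d\theta_1 d\theta_2$, since the asymptotics of $D\xi$ were computed separately in those regions (related by the symmetry $(H,r)\mapsto(-H,r)$, $\nu\mapsto -\nu$ used at the end of the proof of Proposition~\ref{complete}). I would need to check that the \emph{same} one-dimensional subspace spanned by $\nu$ emerges as the bounded-length direction in both regimes, rather than two different subspaces; the symmetry replacing $\nu$ by $-\nu$ should guarantee this, since $\nu$ and $-\nu$ span the same line. A secondary technical point is ruling out that \emph{any} other direction also gives bounded length, which requires confirming that the complementary eigenvalue genuinely grows (so that uniqueness of the $1$-dimensional subspace holds); this should follow from the strict inequalities in condition~(\ref{condition}) together with the strict unboundedness hypothesis $\det(\nu_d,\nu_1)>0$, exactly the conditions that made $V$ bounded away from zero in Proposition~\ref{complete}.
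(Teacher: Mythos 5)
Your strategy is in essence the one the paper follows: the squared length of $w_1\partial/\partial\theta_1+w_2\partial/\partial\theta_2$ is $w^t\Hess^{-1}(\s)\,w$, and $\Hess^{-1}(\s)$ is expressed through $D\xi$ (via $dx_1=r(\xi_{2,r}dH-\xi_{2,H}dr)$, $dx_2=-r(\xi_{1,r}dH-\xi_{1,H}dr)$, giving $\Hess^{-1}(\s)=\frac{r}{\det D\xi}$ times a matrix quadratic in the entries of $D\xi$). One organizational difference: the paper does not redo the general-polygon asymptotics of Proposition~\ref{complete} with the cross terms and the $H\geq 0$ / $H\leq 0$ patching you anticipate; it invokes those asymptotics once to reduce the claim to the \emph{exact} generalized Taub-NUT model on $\R^4$, where $D\xi$ has the closed form $\bigl(\begin{smallmatrix}\frac{1}{\rho}+\alpha & \frac{r}{\rho(H+\rho)}\\ -\frac{1}{\rho}+\beta & \frac{r}{\rho(-H+\rho)}\end{smallmatrix}\bigr)$, and then expands $\Hess^{-1}(\s)$ at fixed $H$ as $r\to\infty$. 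In that regime the squared length of $a\,\partial/\partial\theta_1+b\,\partial/\partial\theta_2$ is
\[
\frac{r^2}{\alpha-\beta}\,(a\beta-b\alpha)^2 \;+\; \frac{2r}{\alpha-\beta}\,(a+b)(b\alpha-a\beta) \;+\; O(1)\,,
\]
so uniqueness is immediate: any fixed $w$ not proportional to $\nu$ has $(a\beta-b\alpha)^2>0$ and its length grows linearly in $r$ (here $\alpha-\beta>0$ by condition~(\ref{condition}) in the model normalization).

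The genuine gap in your plan is the parenthetical equivalence ``eigenvalue stays bounded (equivalently, whose vector field length does not blow up).'' Rank-one degeneracy of the leading $r^2$-form along $\nu$ — which is all your leading-order diagonalization detects — gives uniqueness but \emph{not} existence of a bounded direction. For a metric block of the form $M_0r^2+M_1r+O(1)$ with $\nu\in\Ker M_0$, positivity only forces $\nu^tM_1\nu\geq 0$; if $\nu^tM_1\nu>0$ the $\nu$-field would still grow like $\sqrt{r}$ and \emph{no} fixed Lie algebra vector would have bounded length. Equivalently, the small eigenvalue of the $\theta$-block being $O(1)$ transfers to the fixed vector $\nu$ only if the corresponding eigendirection converges to $\nu$ at rate $O(1/r)$, and verifying that rate is the same subleading computation. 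The paper's explicit model formula settles this: at $(a,b)=(\alpha,\beta)$ the $O(r)$ coefficient $2(a+b)(b\alpha-a\beta)/(\alpha-\beta)$ vanishes along with the leading one, so the norm is $O(1)$. This second cancellation is the actual content of the boundedness half of the proposition, not a formality, so your ``asymptotic diagonalization'' must be carried one order beyond the leading symbol before your argument closes.
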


\begin{proof}
Given the asymptotic behavior of our metric it is enough to check that the above holds for the 
generalized Taub-NUT metrics. We will again use coordinates $(H,r,\theta_1,\theta_2)$ on an open set of $X$ and 
consider a vector field 
\[
v=a\frac{\partial}{\partial \theta_1}+b\frac{\partial}{\partial \theta_2}
\]
We will proceed to calculate the length of $v$ as $H$ is fixed and $r$ tends to infinity. As we have seen in symplectic action-angle coordinates our metric is given by equation (\ref{metricG}), i.e.
\begin{equation}
\begin{bmatrix}
\phantom{-}\Hess(\s) & \vdots & 0\  \\
\hdotsfor{3} \\
\phantom{-}0 & \vdots & \Hess^{-1}(\s)
\end{bmatrix}
\end{equation}
where $\Hess (\s) = \Hess_x (\s)$. The norm of $v$ is 
\[
v^t\text{Hess}^{-1}(\s) v.
\]
We proceed to determine $\text{Hess}^{-1}(\s)$. 
\[
\text{Hess}(\s)=D_x \xi = D\xi \frac{\partial (H,r)}{\partial x}
\implies
\text{Hess}^{-1}(\s)=\frac{\partial x}{\partial (H,r)}(D\xi)^{-1}.
\] 
Hence using
\begin{displaymath}
 dx_1=r\left(\frac{\partial \xi_2}{\partial r} dH-\frac{\partial \xi_2}{\partial H} dr \right)
\quad
\text{and}
\quad
 dx_2=-r\left(\frac{\partial \xi_1}{\partial r} dH-\frac{\partial \xi_1}{\partial H} dr \right).
\end{displaymath}
we see that 
\[
\text{Hess}^{-1}(\s)=r
\begin{pmatrix}
\xi_{2,r} & -\xi_{2,H}  \\
-\xi_{1,r} & \xi_{1,H}  
\end{pmatrix}
\begin{pmatrix}
\xi_{1,H} & \xi_{1,r}  \\
\xi_{2,H} & \xi_{2,r}  
\end{pmatrix}^{-1}
\]
which yields
\[
\text{Hess}^{-1}(\s)=\frac{r}{\xi_{1,H}\xi_{2,r}- \xi_{1,r} \xi_{2,H}}
\begin{pmatrix}
\left(\xi^2_{2,r}+ \xi^2_{2,H}\right)&-\left(\xi_{1,r} \xi_{2,r}+ \xi_{1,H}  \xi_{2,H}\right)   \\
-\left(\xi_{1,r} \xi_{2,r}+ \xi_{1,H}  \xi_{2,H}\right) &\left( \xi^2_{1,r}+ \xi^2_{1,H} \right)
\end{pmatrix}\,.
\]
For the generalized Taub-NUT metric corresponding to the vector $\nu=(\alpha,\beta)$ the 
matrix $D\xi$ is
\[
\begin{pmatrix}
\frac{1}{\rho}+\alpha  
& 
\frac{r}{\rho(H+\rho)} 
\\
- \frac{1}{\rho}+\beta
&
\frac{r}{\rho(-H+\rho)} 
\end{pmatrix}\,,
\]
where as before $\rho=\sqrt{H^2+r^2}$ and condition~(\ref{condition}) implies that $\alpha > 0$ and $\beta < 0$. 
We have that $\text{Hess}^{-1}(\s)$ is given by
\[
\frac{\rho r^2}{(\beta+\alpha)H+(\alpha-\beta)\rho+2}
\begin{pmatrix}
\frac{(1-\beta\rho)^2}{\rho^2}+\frac{r^2}{\rho^2(\rho-H)^2}&-\alpha\beta+\frac{\alpha-\beta}{\rho}  \\
-\alpha\beta+\frac{\alpha-\beta}{\rho}  & \frac{(1+\alpha\rho)^2}{\rho^2}+\frac{r^2}{\rho^2(\rho-H)^2}
\end{pmatrix}.
\]
When $r$ tends to infinity and $H$ is fixed, the above matrix can be written as 
\[
\frac{ r^2}{\alpha-\beta}
\begin{pmatrix}
\beta^2-\frac{2\beta}{r}&-\alpha\beta+\frac{\alpha-\beta}{r}  \\
-\alpha\beta+\frac{\alpha-\beta}{r}  & \alpha^2+\frac{2\alpha}{r}
\end{pmatrix}+O(1)
\]
The norm of the vector $v$ in our metric is therefore 
\[
\frac{ r^2}{\alpha-\beta}\left(a^2\left(\beta^2-\frac{2\beta}{r}\right)+b^2\left( \alpha^2+\frac{2\alpha}{r}\right)+2ab\left(-\alpha\beta+\frac{\alpha-\beta}{r}\right)\right)+O(1).
\]
Hence if we consider 
\[
v=\alpha\frac{\partial}{\partial \theta_1}+\beta\frac{\partial}{\partial \theta_2},
\]
we see that the norm of $v$ is $O(1)$ as $r$ tends to infinity (for fixed $H$) and our claim follows.
\end{proof}

\section{Ricci-flat toric K\"ahler metrics}
\label{sec:ric=0}

An unbounded symplectic toric $4$-manifold $X$ can only admit a Ricci-flat toric K\"ahler metric when $c_1(X)=0$. In this section we will show that when $c_1(X)=0$ the ALE metric constructed in 
Theorem~\ref{precise_thm} is indeed Ricci-flat. We will also show that, in this case, a one parameter sub-family of generalized Taub-NUT metrics from Theorem~\ref{precise_thm} is also Ricci-flat. The fact that the ALE metric is Ricci-flat is mentioned in~\cite{cs} as a consequence of the fact that this metric is hyperk\"ahler. From our viewpoint, this follows quite easily from calculations in action-angle coordinates.

Throughout this section $P$ will denote a strictly unbounded moment polygon with $d$ edges whose 
interior primitive normals will be denoted by $\nu_i=(\alpha_i,\beta_i)\in\Z^2\,,\  i=1,\cdots, d$. Moreover, $\nu=(\alpha,\beta)\in\R^2$ will be such that
\[
\det(\nu,\nu_1),\det(\nu,\nu_d)>0\,,
\]
when $\nu\ne 0$.

We begin with a well known fact.
\begin{prop}
Let $(X,\omega)$ be a symplectic toric manifold endowed with a toric K\"ahler metric $g$ whose symplectic potential is $\s$. Then $\text{Ric}(g)=0$ iff  $\log\det (\text{Hess}(\s))$ is an affine function of the complex coordinates.
\end{prop}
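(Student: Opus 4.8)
The plan is to match the condition $\text{Ric}(g)=0$ with the standard expression for the Ricci form of a K\"ahler metric in holomorphic coordinates, and then to transport that expression to the symplectic side using the action-angle/holomorphic change of coordinates~(\ref{eq:hol_coords}). First I would recall that for any K\"ahler metric, in a holomorphic chart with Hermitian matrix $(g_{\alpha\bar\beta})$, the Ricci form is
\[
\rho = - i\,\p\op \log\Det(g_{\alpha\bar\beta})\,.
\]
Thus $\text{Ric}(g)=0$ if and only if $\log\Det(g_{\alpha\bar\beta})$ is pluriharmonic, i.e. $\p\op\log\Det(g_{\alpha\bar\beta})=0$.

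Next I would pass to the holomorphic coordinates $z_\alpha = \xi_\alpha + i\theta_\alpha$ of~(\ref{eq:hol_coords}), where $\xi_\alpha = \p\s/\p x_\alpha$. Since $J$ is toric, the metric is invariant under the $\theta$-translations, so $(g_{\alpha\bar\beta})$ depends only on $\xi=\Real z$. Starting from the Riemannian form~(\ref{metricG}) and changing variables from $x$ to $\xi$ via $d\xi = \Hess(\s)\,dx$, the $x$-block $\Hess(\s)$ transforms into $(\Hess(\s))^{-1}$, so the metric becomes
\[
g = \sum_{\alpha,\beta} \left(\Hess(\s)\right)^{-1}_{\alpha\beta}\left( d\xi_\alpha\, d\xi_\beta + d\theta_\alpha\, d\theta_\beta\right)\,.
\]
Comparing with $z_\alpha = \xi_\alpha + i\theta_\alpha$ shows that the Hermitian matrix $(g_{\alpha\bar\beta})$ is a constant positive multiple of $(\Hess(\s))^{-1}$; this is just the Legendre duality between the symplectic potential $\s(x)$ and the K\"ahler potential $\phi(\xi)$, for which $\Hess_\xi(\phi)=(\Hess_x(\s))^{-1}$. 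Hence
\[
\log\Det(g_{\alpha\bar\beta}) = \text{const} - \log\Det(\Hess(\s))\,,
\]
where $\Det(\Hess(\s))$ is now viewed as a function on $X$ via $x=x(\xi)$.

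Combining the two steps, $\text{Ric}(g)=0$ if and only if $\p\op\log\Det(\Hess(\s))=0$. Since $\log\Det(\Hess(\s))$ is real and $\T^2$-invariant, it is a function of $\xi=\Real z$ alone, and for any such function $F$ one has $\p_{z_\alpha}\p_{\bar z_\beta}F = \tfrac14\,\p_{\xi_\alpha}\p_{\xi_\beta}F$. Therefore $\p\op F=0$ if and only if $\Hess_\xi(F)=0$, i.e. if and only if $F$ is affine in $\xi_1,\xi_2$, equivalently the real part of an affine holomorphic function of the complex coordinates $z_1,z_2$. This is exactly the assertion that $\log\Det(\Hess(\s))$ is an affine function of the complex coordinates.

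The step requiring the most care is the determinant identity $\Det(g_{\alpha\bar\beta}) = c\,\Det(\Hess(\s))^{-1}$: it rests on the Legendre transform relating $\s(x)$ to the K\"ahler potential and on keeping track of the change of variables $x\leftrightarrow\xi$, which is precisely what forces ``affine in the complex coordinates'' to be read as affine in $\xi=\Real z$ rather than in $x$ (one checks on flat $\C^2$ that $\log\Det(\Hess(\s))$ is affine in $\xi$ but not in $x$). Once the Ricci-form formula and this identity are in place, the remaining argument is purely formal.
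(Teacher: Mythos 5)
Your proof is correct and follows essentially the same route as the paper's own sketch: both reduce $\mathrm{Ric}(g)=0$ to the vanishing of $\p\op\log\Det(\Hess(\s))$ via the standard Ricci formula combined with the Legendre duality $\Hess_\xi(\phi)=(\Hess_x(\s))^{-1}$, and then read off affineness in $\xi=\Real z$. Your only addition is to spell out the final step (for a $\T^2$-invariant function $F$ one has $\p_{z_\alpha}\p_{\bar z_\beta}F=\tfrac14\,\p_{\xi_\alpha}\p_{\xi_\beta}F$, so pluriharmonic is equivalent to affine in $\xi$), which the paper leaves implicit.
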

\begin{proof}
This is only a sketch as the above fact is well known. 

Action-angle coordinates $(x,\theta)$ and complex coordinates $(\xi,\theta)$ are related
by
\[
x = \phi_{\xi}
\quad\text{and}\quad
\xi=\s_{x}\,,
\]
where $\phi = \phi (\xi)$ is the K\"ahler potential of $\omega$, i.e.
\[
\omega=\partial\bar{\partial} \phi.
\]
It is easy to see that
\[
\text{Ric}(g)=\partial\bar{\partial}\log\det (\text{Hess}(\phi)),
\]
because $\det (\text{Hess}(\phi))$ is the Hermitian metric induced by $\omega$ on the 
canonical bundle of $X$. Since, up to a constant, we have
\[
\phi_\xi \circ \s_x = \text{id}\quad\text{and}\quad
\s_x \circ \phi_\xi = \text{id}\,,
\]
then
\[
\text{Hess}(\phi)=\text{Hess}(\s)^{-1},
\]
at the appropriate points. Therefore
\[
\text{Ric}(g)=-\partial\bar{\partial}\log\det (\text{Hess}(\s)).
\]
Since $\log\det (\Hess (\s)) = \log r$ is independent of $\theta$, as a function of the
complex coordinates $(\xi,\theta)$, we conclude that
\[
\text{Ric}(g) = 0 \quad\text{iff}\quad
\log r \ \text{is affine in $\xi$.}
\]
\end{proof}

Next we use our formulas from Theorem \ref{precise_thm} to prove the following:
\begin{lemma} \label{lem:ray1}
The scalar-flat toric K\"ahler metrics defined in Theorem~\ref{precise_thm}
are Ricci-flat iff there is a non-zero vector $\eta\in\bbR^2$ such that 
\[
\eta\ip \nu_j=1,\,\,\, \forall j=1,\cdots,d
\]
and
\[
\eta\ip \nu=0.
\]
\end{lemma}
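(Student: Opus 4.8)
The plan is to combine the preceding Ricci-flatness criterion—that $g$ is Ricci-flat iff $\log r$ is an affine function of the complex coordinates $\xi = (\xi_1,\xi_2)$—with the explicit formulas for $\xi_1$, $\xi_2$ provided by Theorem~\ref{precise_thm}. Since $r = (\det \Hess_x(\s))^{-1/2}$ depends only on $(H,r)$ and not on $\theta$, the condition reduces to requiring that $\log r$, viewed as a function of the two variables $\xi_1$ and $\xi_2$, be affine, i.e.\ that there exist constants $\eta = (\eta_1,\eta_2)\in\R^2$ and $c\in\R$ with $\log r = \eta_1\xi_1 + \eta_2\xi_2 + c$ on the image of $\xi$.

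First I would write down $\eta\ip\xi = \eta_1\xi_1 + \eta_2\xi_2$ using the formulas from Theorem~\ref{precise_thm}. Each of $\xi_1,\xi_2$ is a linear combination of $\log r$, the solutions $\frac{1}{2}\log(H+a_i+\sqrt{(H+a_i)^2+r^2})$, and the affine term $H$, with coefficients read off from the normals $\nu_i=(\alpha_i,\beta_i)$ and from $\nu=(\alpha,\beta)$. Taking the combination $\eta_1\xi_1 + \eta_2\xi_2$, the coefficient of $\log r$ is $\eta\ip\nu_1$; the coefficient of each $\frac{1}{2}\log(H+a_i+\sqrt{(H+a_i)^2+r^2})$ is $\eta\ip(\nu_{i+1}-\nu_i)$; and the coefficient of the term $H$ is $\eta\ip\nu$. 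For $\eta\ip\xi$ to equal $\log r$ up to an additive constant, we need the $\log r$ coefficient to be $1$, every square-root-log coefficient to vanish, and the $H$ coefficient to vanish. The vanishing of all the differences $\eta\ip(\nu_{i+1}-\nu_i)=0$ together with $\eta\ip\nu_1=1$ is exactly equivalent to $\eta\ip\nu_j=1$ for every $j=1,\dots,d$ (telescoping), and the vanishing of the $H$ coefficient is exactly $\eta\ip\nu=0$. This establishes the equivalence claimed in the lemma.

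The one point requiring a little care is the logical direction ``Ricci-flat $\Rightarrow$ such an $\eta$ exists.'' Here I would argue that the functions $\log r$, $H$, and the distinct square-root-logs $\frac{1}{2}\log(H+a_i+\sqrt{(H+a_i)^2+r^2})$ are linearly independent over $\R$ as functions on $\bbH$; this can be seen from their distinct asymptotic behaviors (for instance, from Proposition~\ref{asymp_sol}, the $i$-th square-root-log contributes $\log r$ precisely on the half-line $H<-a_i$ as $r\to 0$, so they are distinguishable). Consequently, if $\log r$ is affine in $\xi$, say $\log r = \eta\ip\xi + c$, then matching the coefficient functions forces the system above, and conversely any solution $\eta$ of the system produces an affine relation. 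Since $r$ depends only on $(H,r)$ and not on $\theta$, affineness in $\xi$ is the same as affineness in the full complex coordinates, so the criterion of the preceding proposition applies directly.

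The main obstacle I anticipate is the linear-independence step and the bookkeeping in the telescoping argument: one must be sure that ``$\eta\ip\xi$ affine in $(H,r)$ matches $\log r$'' can only hold by matching coefficients, which needs the independence of the elementary building-block functions, and one must correctly track which coefficient combination corresponds to each building block. Once that is in place, the equivalence $\bigl(\eta\ip\nu_1=1 \text{ and } \eta\ip(\nu_{i+1}-\nu_i)=0\ \forall i\bigr) \Leftrightarrow \eta\ip\nu_j=1\ \forall j$ is a routine telescoping computation, and the rest is immediate from the Ricci-flatness criterion.
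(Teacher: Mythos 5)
Your proposal is correct and takes essentially the same route as the paper: both invoke the preceding proposition to reduce Ricci-flatness to $\log r$ being affine in $\xi$, expand $\eta\ip\xi$ using the explicit formulas of Theorem~\ref{precise_thm}, and match coefficients ($\eta\ip\nu_1$ on $\log r$, $\eta\ip(\nu_{i+1}-\nu_i)$ on the half-log terms, $\eta\ip\nu$ on $H$), concluding by telescoping. Your additional care with the linear independence of the building-block functions (via their $r\to 0$ asymptotics) merely makes explicit a step the paper's proof leaves implicit.
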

\begin{proof}
We have seen that $\text{Ric}$ is zero when $\log\det (\text{Hess}(\s))=\log r$ is an affine function of $\xi$, i.e. if there is a vector 
$\eta\in\bbR^2$ such that $\eta\ip \xi=\log(r) +$ constant. But we have 
\begin{displaymath}
\xi_1=\alpha_1\log(r)+\frac{1}{2}\sum_{i=1}^{d-1}(\alpha_{i+1}-\alpha_{i})\log\left( H+a_i+\sqrt{(H+a_i)^2+r^2}\right)+\alpha H,
\end{displaymath}
and
\begin{displaymath}
\xi_2=\beta_1\log(r)+\frac{1}{2}\sum_{i=1}^{d-1}(\beta_{i+1}-\beta_{i})\log\left( H+a_i+\sqrt{(H+a_i)^2+r^2}\right)+\beta H,
\end{displaymath}
so that 
\[
\eta\ip \xi=\eta\ip \nu_1\log(r)+\frac{1}{2}\sum_{i=1}^{d-1}(\eta\ip(\nu_{i+1}-\nu_i))\log\left( H+a_i+\sqrt{(H+a_i)^2+r^2}\right)+\eta\ip \nu H.
\]
We see that $\log(r)$ is an affine function of $\xi$ exactly when 
\[
\eta\ip(\nu_{j+1}-\nu_j)=\eta\ip\nu=0, \,\,\, \forall j=1,\cdots d,
\]
and the result follows.
\end{proof}

To finish, we need the following:

\begin{lemma} \label{lem:ray2}
Let $X$ be an unbounded smooth symplectic toric $4$-manifold. Then if the first Chern class of 
$X$ is zero there is a non-zero vector $\eta = (-\beta,\alpha) \in\bbR^2$ such that 
\[
\eta\ip \nu_j = 1\,\  \forall j=1,\cdots,d\,,
\]
and $\nu = (\alpha,\beta)$ satisfies
\[
\det (\nu,\nu_1), \det (\nu,\nu_d) > 0\,.
\]
\end{lemma}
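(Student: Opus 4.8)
The plan is to extract the vector $\eta$ directly from the vanishing of $c_1$ using the standard dictionary between torus-invariant divisors and the edge normals, and then to observe that the two determinant conditions are not merely inequalities but in fact equalities, obtained from $\eta$ by a $90^\circ$ rotation. So the real content lives in producing $\eta$; the determinant positivity will come for free.

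First I would recall the toric geometry underlying $X$. Each edge $E_j$ of $P$ corresponds to a torus-invariant prime divisor $D_j$, namely the one attached to the ray generated by the primitive normal $\nu_j$, and the anticanonical class is $c_1(X) = -K_X = \sum_{j=1}^d D_j$. For a smooth toric surface whose rays span $\R^2$ (which holds here, since adjacent normals form a $\Z$-basis by the Delzant condition), the Picard group sits in the exact sequence $0 \to \Z^2 \to \bigoplus_j \Z D_j \to \operatorname{Pic}(X) \to 0$, in which $m\in\Z^2$ maps to $\operatorname{div}(\chi^m) = \sum_j \langle m, \nu_j\rangle D_j$. Therefore $c_1(X)=0$ holds exactly when $\sum_j D_j$ lies in the image of this map, i.e. when there is $\eta\in\Z^2$ with $\langle \eta,\nu_j\rangle = 1$ for every $j=1,\ldots,d$. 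Comparing coefficients in the free module $\bigoplus_j\Z D_j$ pins down the sign to be $+1$. This yields the desired $\eta$, and it is nonzero since $\eta\cdot\nu_1 = 1 \ne 0$.

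Writing $\eta = (-\beta,\alpha)$ and setting $\nu = (\alpha,\beta)$, the key elementary observation is the identity $\eta\cdot w = \det(\nu,w)$ for every $w\in\R^2$, since both sides equal $\alpha w_2 - \beta w_1$. Applying this with $w=\nu_j$ converts each relation $\eta\cdot\nu_j = 1$ into $\det(\nu,\nu_j)=1$ for all $j$; in particular $\det(\nu,\nu_1) = \det(\nu,\nu_d) = 1 > 0$, which is precisely the required pair of strict inequalities. The very same identity gives $\eta\cdot\nu = \det(\nu,\nu) = 0$ at no extra cost, so the $\eta$ and $\nu$ produced here are automatically compatible with the hypotheses of Lemma~\ref{lem:ray1}.

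I expect the only genuine step to be the first: correctly invoking the characterization of $c_1(X)=0$ through the normals in the non-compact setting, where one must check that the Picard-group sequence and the identification $c_1 = -K_X = \sum_j D_j$ remain valid (they do, once the fan's rays span $\R^2$ so that $X$ has no torus factors and $\operatorname{Pic}=\operatorname{Cl}$). Once $\eta$ is in hand, the rotation identity makes the determinant conditions and the orthogonality $\eta\cdot\nu=0$ immediate, so there is no analytic or combinatorial obstacle beyond this algebro-geometric input.
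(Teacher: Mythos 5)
Your route is genuinely different from the paper's, and its second half is cleaner than what the paper does: the rotation identity $\eta\ip w=\det(\nu,w)$ for $\eta=(-\beta,\alpha)$, $\nu=(\alpha,\beta)$ upgrades $\eta\ip\nu_j=1$ to $\det(\nu,\nu_j)=1$ for \emph{every} $j$ (the paper only verifies $j=1,d$, by explicit computation after an $SL(2,\Z)$ normalization), and it delivers $\eta\ip\nu=0$, the other hypothesis of Lemma~\ref{lem:ray1}, for free. But there is a genuine gap at precisely the step you flag as the only substantive one. The exact sequence $0\to\Z^2\to\bigoplus_j\Z D_j\to\operatorname{Pic}(X)\to 0$ tells you that $\sum_j D_j$ lies in the image of $\Z^2$ if and only if $-K_X$ is trivial \emph{in $\operatorname{Pic}(X)$}; the hypothesis of the lemma, however, is vanishing of the topological class $c_1(X)\in H^2(X;\Z)$. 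Passing from one to the other requires injectivity of $c_1\colon\operatorname{Pic}(X)\to H^2(X;\Z)$, and your parenthetical justification (rays span $\R^2$, no torus factors, $\operatorname{Pic}=\operatorname{Cl}$) establishes exactness of the sequence but says nothing about this comparison map. For smooth \emph{compact} toric varieties $\operatorname{Pic}\cong H^2$ is classical; in the non-compact setting at hand it is true but is exactly the point that needs an argument, so as written the proof assumes the hardest part.

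The cheapest repair converges with the paper's own proof. Since adjacent normals form a $\Z$-basis, $X$ is simply connected and $H_2(X;\Z)$ is freely generated by the compact invariant spheres $D_2,\dots,D_{d-1}$ (the preimages of the bounded edges), so $c_1(X)=0$ is equivalent to $-K_X\ip D_j=0$ for those $j$; by adjunction this gives $D_j^2=-2$, i.e. $\det(\nu_{j-1},\nu_{j+1})=-2$, which together with $\det(\nu_{j-1},\nu_j)=\det(\nu_j,\nu_{j+1})=-1$ forces the recursion $\nu_{j-1}+\nu_{j+1}=2\nu_j$. Now define $\eta\in\Z^2$ by $\eta\ip\nu_1=\eta\ip\nu_2=1$ (possible integrally because $\det(\nu_1,\nu_2)=-1$) and induct: $\eta\ip\nu_{j+1}=2\,\eta\ip\nu_j-\eta\ip\nu_{j-1}=1$. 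This is essentially the paper's argument (adjunction plus the determination of $\nu_{j+1}$ from $\nu_{j-1},\nu_j$), except that the paper uses the recursion to normalize the polygon to the standard string $\nu_j=(j-1,-(j-2))$ and then exhibits $\eta=(1,1)$, $\nu=(1,-1)$ explicitly, transporting back by $SL(2,\Z)$ --- a normal form it reuses in the gravitational-instanton examples of Section~\ref{sec:examples}. Your version, once patched this way, avoids the normalization and yields the sharper statement $\det(\nu,\nu_j)=1$ for all $j$; but the algebro-geometric shortcut via $\operatorname{Pic}$ alone does not suffice without the $\operatorname{Pic}\hookrightarrow H^2$ input.
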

\begin{proof}
Let $D_j$ be the pre-image under the moment map of the $j$-th edge of the moment polygon of $X$. 
For $j=2,\cdots,d-1$, $D_j$ is a $2$-sphere. First note that if $c_1=0$ then 
\[
D_j\ip D_j=-2\,,
\]
where the above refers to the self-intersection number of $D_j$. This follows from the adjunction 
formula. It is also easy to check that
\[
D_j\ip D_j=\det(\nu_{j-1},\nu_{j+1})\,,\ \forall \,j=2,\cdots,d-1\,.
\]
Hence 
\[
c_1=0 \implies \det(\nu_{j-1},\nu_{j+1})=-2\,. 
\]
Now, given $\nu_{j-1}$ and $\nu_j$, the normal $\nu_{j+1}$ is completely determined by the 
relations
\[
\det(\nu_{j-1},\nu_{j+1})=-2\quad\text{and}\quad\det(\nu_{j},\nu_{j+1})=-1\,.
\]
One can easily see this by reducing to the case where $\nu_{j-1}=(0,1)$ and $\nu_{j}=(1,0)$. 
The above relations then imply that $\nu_{j+1}=(2,-1)$. Applying this argument inductively we see
that the moment polygon of $X$ is completely determined by its first two normal vectors and its total
number $d$ of edges. 
More precisely, if $c_1 (X) = 0$ then the moment polygon of $X$ is $SL(2,\bbZ)$ equivalent to 
a moment polygon with the following normals: $\nu_1=(0,1)$, $\nu_2=(1,0)$, $\nu_3=(2,-1)$, up to $\nu_d=(d-1,-(d-2))$. When $d=p+1$ this is the moment polygon of the $A_p$ toric resolution 
mentioned in the Introduction (see Figure~\ref{fig:hstringn})

Take $\eta=(1,1)$. Then, we see that indeed
\[
\eta\ip \nu_j=1,\,\,\, \forall j=1,\cdots,d\,,
\]
while $\nu = (1,-1)$ satisfies
\[
\det (\nu,\nu_1) = 1 > 0 \quad\text{and}\quad \det (\nu,\nu_d) = -(d-2) + (d-1) = 1 > 0\,.
\]
In the general case, we can simply use the $SL(2,\bbZ)$ transformation to determine $v$ and that completes the proof.
\end{proof}

Putting the above results together we see that

\begin{prop}
When $c_1(X)=0$, the ALE metrics from Theorem~\ref{precise_thm} are Ricci-flat and there is 
a one parameter family of Ricci-flat metrics among those which are asymptotic to the 
Taub-NUT metric.
\end{prop}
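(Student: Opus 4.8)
The plan is to combine the two lemmas just proved with the scalar-flat construction of Theorem~\ref{precise_thm}. The whole statement is really a matter of assembling pieces that are now in place, so I would present it as a short synthesis rather than a fresh computation.

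First, for the ALE metric. This is the $\nu = 0$ case of Theorem~\ref{precise_thm}. By Lemma~\ref{lem:ray1}, the scalar-flat metric produced there is Ricci-flat precisely when there is a non-zero $\eta\in\R^2$ with $\eta\ip\nu_j = 1$ for all $j$ and $\eta\ip\nu = 0$; when $\nu = 0$ the second condition is vacuous, so I only need to produce an $\eta$ with $\eta\ip\nu_j = 1$ for every edge normal. But Lemma~\ref{lem:ray2} supplies exactly such an $\eta = (-\beta,\alpha)$ under the hypothesis $c_1(X) = 0$. Hence the ALE metric is Ricci-flat, as claimed.

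Second, for the one-parameter family. Here I would invoke Lemma~\ref{lem:ray2} in full: it gives both a vector $\eta = (-\beta,\alpha)$ with $\eta\ip\nu_j = 1$ for all $j$ and the companion vector $\nu = (\alpha,\beta)$ satisfying $\det(\nu,\nu_1), \det(\nu,\nu_d) > 0$. For this $\nu$, the two conditions of Lemma~\ref{lem:ray1} hold: the first, $\eta\ip\nu_j = 1$, gives $\eta\ip(\nu_{j+1}-\nu_j) = 0$; and since $\eta = (-\beta,\alpha)$ we have $\eta\ip\nu = -\beta\alpha + \alpha\beta = 0$. Thus the scalar-flat metric associated to this particular $\nu\ne 0$ is Ricci-flat. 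By the remark following Proposition~\ref{complete}, scaling $\nu$ by a positive constant does not affect the conditions $\det(\nu,\nu_1), \det(\nu,\nu_d) > 0$ and keeps $\eta\ip\nu = 0$, so the entire ray $\{t\nu : t > 0\}$ inside the cone of Remark~\ref{rem:cone} yields Ricci-flat metrics asymptotic to a Taub-NUT metric (these are the metrics of the second part of Proposition~\ref{complete}). This is the advertised one-parameter family.

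The only point requiring a little care — and the closest thing to an obstacle — is checking that the $\nu$ singled out by Lemma~\ref{lem:ray2} lies in the \emph{interior} of the admissible cone, i.e. that the inequalities in~(\ref{open_cone}) are strict, so that the resulting metric genuinely belongs to the generalized Taub-NUT family of Proposition~\ref{complete} rather than to the boundary ($\nu = 0$) ALE case. Lemma~\ref{lem:ray2} already records $\det(\nu,\nu_1) = \det(\nu,\nu_d) = 1 > 0$ in the normalized model, and since the general case is obtained by an $SL(2,\Z)$ change of basis, which preserves determinants, strictness persists. With that confirmed, the ray of Ricci-flat metrics sits in the open cone and the two halves of the statement follow immediately, completing the proof.
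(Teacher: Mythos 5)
Your proposal is correct and matches the paper's intended argument exactly: the paper offers no separate proof, simply stating ``Putting the above results together,'' and your synthesis of Lemma~\ref{lem:ray1} (with the condition $\eta\ip\nu=0$ vacuous when $\nu=0$) and Lemma~\ref{lem:ray2} (which supplies $\eta=(-\beta,\alpha)$ and a $\nu=(\alpha,\beta)$ in the open cone, automatically orthogonal to $\eta$, whose positive multiples give the ray of Remark~\ref{rem:dualcone}) is precisely that assembly. One trivial quibble: the invariance of the strict inequalities and of $\eta\ip(t\nu)=0$ under scaling $t>0$ follows directly from linearity and needs no appeal to the remark after Proposition~\ref{complete}, which concerns the $\R^4$ case.
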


\section{Examples}\label{sec:examples}

\subsection{ALE metrics on $\mathcal{O}(-p)$}

Let $\Gamma$ be the finite cyclic diagonal subgroup of $U(2)$ generated by 
\begin{displaymath}
 e^{\frac{2i\pi}{p}}
\begin{pmatrix}
 1&0\\
 0&1
\end{pmatrix}.
\end{displaymath}
It is well known that the minimal resolution of $\mathbb{C}^2/\Gamma$ is $\mathcal{O}(-p)$.
As mentioned in the Introduction, LeBrun~\cite{l0} constructed ALE scalar-flat toric
K\"ahler metrics on these non-compact complex surfaces. Their symplectic potentials
can easily be written down explicitly since these metrics can be seen as part of
Calabi's family of extremal K\"ahler metrics (see~\cite{m3}). Here, as a warm-up, we
will see how to obtain these symplectic potentials using our method.

The moment polygon of $X=\mathcal{O}(-p)$ has normals $\nu_1 = (0,1)$, $\nu_2 = (1,0)$ and 
$\nu_3 = (p,-1)$ (see Figure~\ref{fig:opn}).

\begin{figure}
      \centering
      \includegraphics[scale=0.80]{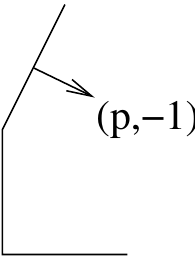}
      \caption{The moment polygon of $\mathcal{O}(-p)$.}
      \label{fig:opn}
\end{figure}

As in Theorem~\ref{precise_thm} with $\nu=0$, we write
\begin{eqnarray}\nonumber
 2\xi_1&=&\log\left( H+\sqrt{H^2+r^2}\right)+(p-1)\log\left( H+a+\sqrt{(H+a)^2+r^2}\right)\\
2\xi_2&=&\log\left( -H+\sqrt{H^2+r^2}\right)-\log\left( H+a+\sqrt{(H+a)^2+r^2}\right)\nonumber
\end{eqnarray}
(where we assume that $a_1=0$ and set $a_2=a$). Since
\begin{displaymath}
 dx_1=\epsilon_1=r\left(\frac{\partial \xi_2}{\partial r} dH-\frac{\partial \xi_2}{\partial H} dr \right),
\end{displaymath}
\begin{displaymath}
 dx_2=\epsilon_2=-r\left(\frac{\partial \xi_1}{\partial r} dH-\frac{\partial \xi_1}{\partial H} dr \right),
\end{displaymath}
a simple calculation shows that
\begin{eqnarray}\nonumber
 2x_1&=& H+\sqrt{H^2+r^2} -(H+a)+\sqrt{(H+a)^2+r^2}\\
2x_2&=&-H+\sqrt{H^2+r^2}+(p-1)\left(-(H+a)+\sqrt{(H+a)^2+r^2}\right).\nonumber
\end{eqnarray}
It follows from these formulas that
\begin{eqnarray}\nonumber
 H+\sqrt{H^2+r^2}&=&\frac{2x_1(px_1-x_2+a)}{px_1+a}\\ \nonumber
-H+\sqrt{H^2+r^2}&=&\frac{2x_2(x_1+a)}{px_1+a}\\ \nonumber
(H+a)+\sqrt{(H+a)^2+r^2}&=&\frac{2(x_1+a)(px_1-x_2+a)}{px_1+a}. \nonumber
\end{eqnarray}
Since $d\s=\xi_1dx_1+\xi_2dx_2$, we find that
\begin{eqnarray}\nonumber
2\s_{x_1}&=& \log(x_1)+(p-1)\log(x_1+a)+p\log(px_1-x_2+a)-p\log(px_1+a)+p\log 2\\ \nonumber
2\s_{x_2}&=& \log(x_2)-\log(px_1-x_2+a).
\end{eqnarray}
Hence, $2\s$ is given by the standard potential associated to the moment polygon
\begin{displaymath}
 x_1\log(x_1)+x_2\log(x_2)+(px_1-x_2+a)\log(px_1-x_2+a)
\end{displaymath}
plus
\begin{displaymath}
(p-1)(x_1+a)\log(x_1+a)-(px_1+a)\log(px_1+a)-px_1+px_1\log 2\,.
\end{displaymath}

Note that in order to compare with the formula in~\cite{m3} we need to use a coordinate 
change
\begin{displaymath}
\begin{cases}
{\bf x_1}&=px_1-x_2+1\\
{\bf x_2}&=x_2,
\end{cases} 
\end{displaymath}
and $a=1/2$.

\subsection{Gravitational instantons}

Consider now the case where $\Gamma=\Gamma_p$ is the finite subgroup of $SU(2)$, of order
$p\in\N$, generated by
\begin{displaymath}
\begin{pmatrix}
  e^{\frac{2i\pi}{p}}&0\\
 0& e^{\frac{2i\pi(p-1)}{p}}
\end{pmatrix}.
\end{displaymath}
Let $X$ be the minimal toric resolution of $\mathbb{C}^2/\Gamma$. Since $c_1 (X) = 0$, it
follows from Lemma~\ref{lem:ray2} that its moment polygon is
$SL(2,\bbZ)$ equivalent to one with normals $\nu_1=(0,1)$, $\nu_2=(1,0)$, \dots,
$\nu_{p+1}=(p,-(p-1))$ (see Figure~\ref{fig:hstringn}).

\begin{figure}
      \centering
      \includegraphics[scale=0.80]{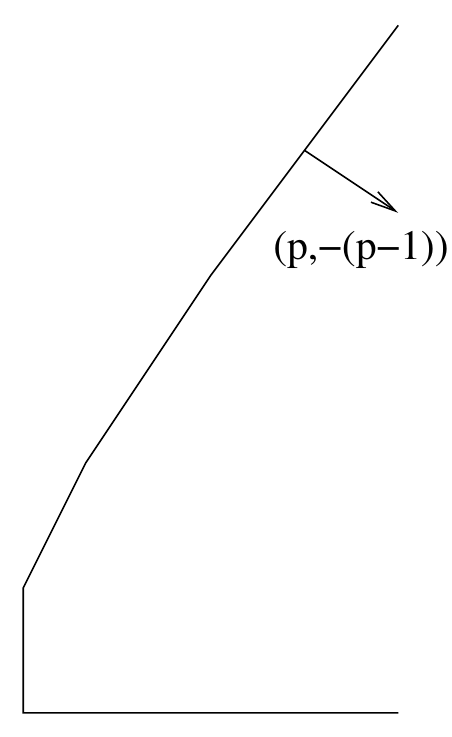}
      \caption{Moment polygon with $c_1 = 0$.}
      \label{fig:hstringn}
\end{figure}

Assume again without loss of generality that $a_1=0$. By applying Theorem \ref{precise_thm} with $\nu=0$ to this case we see that:
\begin{eqnarray}\nonumber
 2\xi_1&=&\log\left( H+\sqrt{H^2+r^2}\right)+\sum_{i=2}^p\log\left( H+a_i+\sqrt{(H+a_i)^2+r^2}\right)\\
2\xi_2&=&\log\left( -H+\sqrt{H^2+r^2}\right)-\sum_{i=2}^p\log\left( H+a_i+\sqrt{(H+a_i)^2+r^2}\right).\nonumber
\end{eqnarray}
Again a simple calculation shows that
\begin{eqnarray}\nonumber
 2x_1&=& H+\sqrt{H^2+r^2}+ \sum_{i=2}^p\sqrt{(H+a_i)^2+r^2}-(H+a_i)\\
2x_2&=&-H+\sqrt{H^2+r^2}+\sum_{i=2}^p\sqrt{(H+a_i)^2+r^2}-(H+a_i).\nonumber
\end{eqnarray}
Set $2u=H+\sqrt{H^2+r^2}$ and $2v=-H+\sqrt{H^2+r^2}$ so that $H=u-v=x_1-x_2$ and $r^2=4uv$. We have that $d\s=\xi_1dx_1+\xi_2dx_2$ and therefore $2d\s$ is given by
\begin{displaymath}
 \log(2u)dx_1+\log(2v)dx_2+\sum\log\left(u-v+a_i+\sqrt{(u-v+a_i)^2+4uv} \right)(dx_1-dx_2).
\end{displaymath}
As in \cite{d2} we set ${\bf v}=x_1\log(2u)+x_2\log(2v)$ so that $2d\s-d{\bf v}$ is equal to
\begin{displaymath}
 \frac{-x_1du}{u}+\frac{-x_2dv}{v}+\sum\log\left(u-v+a_i+\sqrt{(u-v+a_i)^2+4uv} \right)(du-dv).
\end{displaymath}
But
\begin{eqnarray}\nonumber
 \frac{x_1}{u}=1+\sum\frac{-(u-v+a_i)+\sqrt{(u-v+a_i)^2+4uv}}{2u}\\ \nonumber
\frac{x_2}{v}=1+\sum\frac{-(u-v+a_i)+\sqrt{(u-v+a_i)^2+4uv}}{2v},
\end{eqnarray}
therefore $2d\s-d{\bf v}+d(u+v)$ is equal to
\begin{displaymath}
 -\frac{1}{2}\sum{ \left(  \sqrt{(u-v+a_i)^2+4uv}-(u-v+a_i) \right)}  \left( \frac{du}{u}+\frac{dv}{v}\right) 
\end{displaymath}
plus
\begin{displaymath}
 \sum\log\left( {\sqrt{(u-v+a_i)^2+4uv}}+u-v+a_i\right) (du-dv).
\end{displaymath}
Set 
\begin{displaymath}
A_i={\sqrt{(u-v+a_i)^2+4uv}}+u-v+a_i,
\end{displaymath}
and
\begin{displaymath}
B_i={\sqrt{(u-v+a_i)^2+4uv}}-(u-v+a_i).
\end{displaymath}
To find $\s$ we need to find a primitive of 
\begin{displaymath}
 \log(A_i)(du-dv)-\frac{B_i}{2} \left( \frac{du}{u}+\frac{dv}{v}\right).
\end{displaymath}
Note that 
\begin{displaymath}
du-dv=\frac{1}{2}(dA_i-dB_i)
\end{displaymath}
and that 
\begin{displaymath}
d\log(A_iB_i)=\frac{du}{u}+\frac{dv}{v}.
\end{displaymath}
Hence we need to find a primitive of
\begin{displaymath}
 \frac{1}{2}\left( \log(A_i)(dA_i-dB_i)-B_id\log(A_iB_i)\right),
\end{displaymath}
which is 
\begin{displaymath}
 \frac{1}{2}\left( (A_i-B_i)\log A_i-(A_i+B_i)\right)\,.
\end{displaymath}
Hence $2\s$ is simply
\begin{align}
& x_1\log(2u)+x_2\log(2v)-(u+v) \nonumber \\ 
+ & \sum (u-v+a_i)\log\left( \sqrt{(u-v+a_i)^2+4uv}+u-v+a_i\right) \nonumber \\
- & \sum\sqrt{(u-v+a_i)^2+4uv} \nonumber 
\end{align}
where $u$ and $v$ are algebraic functions of $x_1$ and $x_2$. 
The case $p=2$ corresponds to $X = \mathcal{O}(-2)$ in the previous subsection.
The case $p=3$ can be written more explicitly by finding the roots of a degree $4$ polynomial. 
In fact, for any $p\in\N$, $u$ and $v$ can be obtained by solving
\begin{eqnarray}\nonumber
 2x_1=2u+\sum_{i=2}^p{\sqrt{(u-v+a_i)^2+4uv}}-(u-v+a_i)\\ \nonumber
2x_2=2v+\sum_{i=2}^p{\sqrt{(u-v+a_i)^2+4uv}}-(u-v+a_i).
\end{eqnarray}

\subsection{Generalized Taub-NUT metrics on $\mathcal{O}(-2)$}

As we have seen, the total space of $\mathcal{O}(-2)$ is a non-compact toric manifold 
whose moment polygon has $3$ edges with normals $\nu_1 = (0,1)$, $\nu_2 = (1,0)$ and 
$\nu_3 = (2,-1)$ (see Figure~\ref{fig:opn}). 
As in Theorem~\ref{precise_thm} set
\begin{eqnarray}\nonumber
 2\xi_1&=&\log\left( H+\sqrt{H^2+r^2}\right)+\log\left( H+a+\sqrt{(H+a)^2+r^2}\right)+\alpha H\\
2\xi_2&=&\log\left( -H+\sqrt{H^2+r^2}\right)-\log\left( H+a+\sqrt{(H+a)^2+r^2}\right)+\beta H\nonumber
\end{eqnarray}
where $\nu = (\alpha,\beta)$ is such that
\[
\det (\nu,\nu_1) = \alpha > 0 \quad\text{and}\quad \det (\nu,\nu_d) = -\alpha - 2 \beta  > 0\,.
\]
We get
\begin{eqnarray}\nonumber
 2x_1&=& H+\sqrt{H^2+r^2} -(H+a)+\sqrt{(H+a)^2+r^2}-\frac{\beta r^2}{2}\\
2x_2&=&-H+\sqrt{H^2+r^2}-(H+a)+\sqrt{(H+a)^2+r^2}+\frac{\alpha r^2}{2}.\nonumber
\end{eqnarray}
Again it is useful to set $2u=H+\sqrt{H^2+r^2}$ and $2v=-H+\sqrt{H^2+r^2}$, ${\bf v}=x_1\log(2u)+x_2\log(2v)$ and
\begin{displaymath}
A={\sqrt{(u-v+a)^2+4uv}}+u-v+a,
\end{displaymath}
and
\begin{displaymath}
B={\sqrt{(u-v+a)^2+4uv}}-(u-v+a).
\end{displaymath}
The $1$-form $2d\s-d{\bf v}$ is equal to the sum of
\begin{displaymath}
 \alpha udu-\beta vdv+dB
\end{displaymath}
and
\begin{displaymath}\label{term1}
 \log(A)(du-dv)-\frac{B}{2}\left( \frac{du}{u}+\frac{dv}{v}\right),
\end{displaymath}
and
\begin{displaymath}\label{term2}
 -(\alpha+\beta)\left( \log(A)(udv+vdu)-\frac{u-v}{2}dB\right).
\end{displaymath}
We have seen that a primitive of the second term above is 
\begin{displaymath}
 \frac{1}{2}\left( (A-B)\log A-(A+B)\right).
\end{displaymath}
To find a primitive for the third term, we note that this term is equal to
\begin{displaymath}
 \log(A)\frac{d(AB)}{4}-\frac{A-B}{4}dB+\frac{a}{2}dB.
\end{displaymath}
Therefore a primitive for the third term is
\begin{displaymath}
 uv(\log(A)-1)+\frac{B^2}{8}(1+2a).
\end{displaymath}
We conclude that $2\s$ is given by
\begin{align}
& x_1\log(2u)+x_2\log(2v)+\frac{\alpha u^2-\beta v^2}{2} + v-u-a+{\sqrt{(u-v+a)^2+4uv}}\nonumber \\
+ & (u-v+a)\log\left({\sqrt{(u-v+a)^2+4uv}}+u-v+a \right)
-{\sqrt{(u-v+a)^2+4uv}} \nonumber \\
- & (\alpha+\beta)uv \left( \log\left( {\sqrt{(u-v+a)^2+4uv}}+u-v+a\right)-1 \right) \nonumber \\
- & \frac{(1+2a)(\alpha+\beta)}{8} \left( {\sqrt{(u-v+a)^2+4uv}}-(u-v+a) \right)^2\,, \nonumber
\end{align}
where $v$ is a zero of a degree $4$ polynomial with coefficients which are degree $1$ polynomials in $x_1$ and $x_2$, and 
\begin{displaymath}
u=\frac{x_1-x_2+v}{1-(\alpha+\beta)v}.
\end{displaymath}
 The formulas for $u$ and $v$ come from solving the equations
\begin{eqnarray}\nonumber
 2x_1={\sqrt{(u-v+a)^2+4uv}}+u+v-a - 2 \beta uv\\ \nonumber
2x_2={\sqrt{(u-v+a)^2+4uv}}-u+3v-a + 2 \alpha uv.
\end{eqnarray}

Using Lemma~\ref{lem:ray1} with $\eta=(1,1)$, we see that when $\alpha + \beta = 0$ we get ``multi Taub-NUT" Ricci-flat 
toric K\"ahler metrics on $\mathcal{O}(-2)$. By the classification result of Bielawski~\cite{B}, these are isometric to the ones 
constructed by Hawking~\cite{ha} and studied by LeBrun~\cite{l2}.

When $\alpha + \beta \ne 0$ we get new complete scalar-flat toric K\"ahler metrics on $\mathcal{O}(-2)$.
These are not Ricci-flat, but are asymptotic to a generalized Taub-NUT metric.

\end{document}